\title{\bf Weak Type Estimates for Square Functions of Dunkl Heat Flows}
\author{Huaiqian Li\footnote{Email: {\color{blue}huaiqianlee@gmail.com}}
  \vspace{2mm}
\\
{\footnotesize Center for Applied Mathematics, Tianjin University, Tianjin 300072, P. R. China}
}
\date{}
\def\R{\mathbb{R}}
\def\D{\mathbb{D}}
\def\L{\mathcal{L}}
\def\d{\textup{d}}
\def\D{\textup{D}}
\def\Lip{\textup{Lip}}
\def\RCD{\textup{RCD}}
\def\<{\langle}
\def\>{\rangle}
\def\Proof.{\noindent{\bf Proof. }}
\def\loc{\textup{loc}}
\def\newdot{{\kern.8pt\cdot\kern.8pt}}
\newtheorem{theorem}{Theorem}[section]
\newtheorem{lemma}[theorem]{Lemma}
\theoremstyle{definition}\newtheorem{remark}[theorem]{Remark}
\begin{document}
\allowdisplaybreaks
\maketitle
\makeatletter 
\renewcommand\theequation{\thesection.\arabic{equation}}
\@addtoreset{equation}{section}
\makeatother 

\begin{abstract}
The weak $(1,1)$ boundedness of the Littlewood--Paley--Stein square function for the Dunkl heat flow is proved via estimates on the Dunkl heat kernel of integral type and the Caldr\'{o}n--Zygmund decomposition, which is the continuity of the recently work \cite{LZ2020}
where the dimension-free $L^p$ boundedness of the same square function is studied.
\end{abstract}

{\bf MSC 2020:} primary 42B25, 42B20; secondary 35K08, 42B10

{\bf Keywords:} Dunkl operator; Dunkl heat kernel; Littlewood--Paley--Stein square function

\section{Introduction to main results}\hskip\parindent
In this section, we aim to recall some necessary basics on the Dunkl operator and then present the main results of this work. The Dunkl operator, initially introduced by C.F. Dunkl in \cite{Dunkl1988,Dunkl1989},  has been studied intensively. For a general overview on its development and  more details, refer to the survey papers \cite{Rosler2003,Anker2017} and the monographs \cite{DunklXu2014,DX2015}.

Consider the $d$-dimensional Euclidean space $\R^d$, endowed with the standard inner product $\langle\cdot,\cdot\rangle$ and the induced norm $|\cdot|$. For every $\alpha\in\R^d\setminus\{0\}$, define
$$r_\alpha x =x-2\frac{\langle \alpha,x\rangle}{|\alpha|^2}\alpha,\quad x\in\R^d,$$
where $r_\alpha$ is the reflection operator with respect to the hyperplane orthogonal to $\alpha$.

Let $\mathfrak{R}$ denote the root system, which is a finite subset of $\R^d\setminus\{0\}$ and satisfies that, for every $\alpha\in\mathfrak{R}$,  $r_\alpha\mathfrak{R}=\mathfrak{R}$ and $\alpha\R\cap \mathfrak{R}=\{\alpha,-\alpha\}$. Without loss of generality, we assume that $|\alpha|=\sqrt{2}$ for all $\alpha\in\mathfrak{R}$. Let $G$ be the reflection (or Weyl) group generated by $\{r_\alpha:\alpha\in\mathfrak{R}\}$. Note that $G$ is a finite subgroup of the orthogonal group $O(d)$, i.e., the group of $d\times d$ orthogonal matrices, and  $\{r_\alpha: \alpha\in\mathfrak{R}\}\subset G$ (see e.g. \cite[Theorem 6.2.7]{DunklXu2014} for a proof).  Let $\mathfrak{R}_+$ be any chosen positive subsystem such that $\mathfrak{R}$ is the disjoint union of $\mathfrak{R}_+$ and $-\mathfrak{R}_+$.

Let $\kappa_\cdot: \mathfrak{R}\rightarrow\R_+$ be the multiplicity function such that it is $G$-invariant, i.e., $\kappa_{g\alpha}=\kappa_\alpha$ for every $g\in G$ and every $\alpha\in\mathfrak{R}$.

Let $\xi\in\R^d$. Define the Dunkl operator $\D_\xi$ along $\xi$ associated to the root system $\mathfrak{R}$ and the multiplicity function $\kappa$  by
$$\D_\xi f(x)=\partial_\xi f(x)+\sum_{\alpha\in\mathfrak{R}_+}\kappa_\alpha \langle\alpha,\xi\rangle \frac{f(x)-f(r_\alpha x)}{\langle\alpha,x\rangle},\quad f\in C^1(\R^d),\,x\in\R^d,$$
where $\partial_\xi$ denotes the directional derivative along $\xi$. It is important to mention that, for every $\xi,\eta\in\R^d$, $\D_\eta\circ \D_\xi=\D_\xi\circ \D_\eta$. However, in general, due to the difference part, the Leibniz rule and the chain rule may not hold for $\D_\xi$.

Let $\{e_j: j=1,\cdots,d\}$ be the standard orthonormal basis of $\R^d$, and write $\D_j$ instead of $\D_{e_j}$ for short, $j=1,\cdots,d$. We denote $\nabla_\kappa=(\D_1,\cdots,\D_d)$ and $\Delta_\kappa=\sum_{j=1}^d \D_j^2$ the Dunkl gradient operator and the Dunkl Laplacian, respectively. By a straightforward calculation, for every $f\in C^2(\R^d)$,
\begin{eqnarray*}\label{Delta}
	\Delta_\kappa f(x)=\Delta f(x)+2\sum_{\alpha\in\mathfrak{R}_+}\kappa_\alpha\Big(\frac{\langle\alpha,\nabla f(x)\rangle}{\langle\alpha,x\rangle} - \frac{f(x)-f(r_\alpha x)}{\langle\alpha,x\rangle^2}\Big),\quad x\in\R^d.
\end{eqnarray*}
Obviously, when $\kappa=0$, then $\nabla_0=\nabla$ and $\Delta_0=\Delta$, which are the gradient operator and the Laplacian on $\R^d$, respectively.

Similar as the Laplacian case,  define the \emph{carr\'{e} du champ} (i.e., square (norm) of the (vector) field in English) $\Gamma$ (see e.g. \cite{BE1985}) by
$$\Gamma(f,g):=\frac{1}{2}\big[\Delta_\kappa(fg)-f\Delta_k g-g\Delta_\kappa f\big],\quad f,g\in C^2(\R^d).$$
Set $\Gamma(f)=\Gamma(f,f)$ for convenience. It is easy to show that, for every $f,g\in C^2(\R^d)$ and $x\in\R^d$,
\begin{equation}\label{Gamma-op}
\Gamma(f,g)(x)=\langle\nabla f(x),\nabla g(x)\rangle + \sum_{\alpha\in\mathfrak{R}_+}\kappa_\alpha\frac{\big(f(x)-f(r_\alpha x)\big)\big(g(x)-g(r_\alpha x)\big)}{\langle\alpha,x\rangle^2},
\end{equation}
and hence $\Gamma(f)\geq0$. Let
$$ \chi=\sum_{\alpha\in\mathfrak{R}_+}\kappa_\alpha.$$
From \cite[Remark 1.4(i)]{LZ2020}), we have the following pointwise inequality:
\begin{eqnarray}\label{grad-Gamma-comp}
|\nabla_\kappa f|^2\leq (1+2\chi)\Gamma(f),\quad f\in C^2(\R^d),
\end{eqnarray}
and in general, the converse is not true (see e.g. \cite[Theorem 3.5]{ML2019}).

The natural measure associated to the Dunkl operator is  $w_\kappa\L_d$, where for every $x\in\R^d$,
$$w(x):=\prod_{\alpha\in\mathfrak{R}_+}|\langle\alpha,x\rangle|^{2\kappa_\alpha},$$
and $\L_d$ stands for the Lebesgue measure on $\R^d$. Let $\mu_\kappa=w_\kappa\L_d$. For each $p\in[1,\infty]$, we denote the $L^p$ space by $L^p(\mu_\kappa):=L^p(\R^d,\mu_\kappa)$ and the corresponding norm by $\|\cdot\|_{L^p(\mu_\kappa)}$.

Let $H_\kappa(t):=e^{t\Delta_\kappa}$, $t\geq0$, be the Dunkl heat flow, which is self-adjoint in $L^2(\mu_\kappa)$. For $1\leq p<\infty$, $(H_\kappa(t))_{t\geq0}$ can be extended uniquely to a strongly continuous contraction semigroup in $L^p(\mu_\kappa)$, for which, with some abuse of notation, we keep the same notation. See \cite{Rosler1998,RosVoi1998,Rosler2003} for further properties.

We are concerned with square functions corresponding to the Dunkl heat flow. For $f\in C_c^\infty(\R^d)$, $x\in\R^d$, define the vertical Littlewood--Paley--Stein square functions by
\begin{eqnarray*}
\mathcal{V}_\Gamma(f)(x)&=&\bigg(\int_0^\infty \Gamma\big(H_\kappa(t)f\big)(x)\,\d t\bigg)^{1/2},\\
\mathcal{V}_{\nabla_\kappa}(f)(x)&=&\bigg(\int_0^\infty |\nabla_\kappa H_\kappa(t)f|^2(x)\,\d t\bigg)^{1/2},\\
\mathcal{V}_{\nabla}(f)(x)&=&\bigg(\int_0^\infty |\nabla H_\kappa(t)f|^2(x)\,\d t\bigg)^{1/2},
\end{eqnarray*}
and the horizontal Littlewood--Paley--Stein square function by
\begin{eqnarray*}
\mathcal{H}(f)(x)&=&\bigg(\int_0^\infty t\big|\partial_t H_\kappa(t)f\big|^2(x)\,\d t\bigg)^{1/2}.
\end{eqnarray*}
It is easy to show, as operators initially defined on $C_c^\infty(\R^d)$, $\mathcal{V}_{\Gamma},\mathcal{V}_{\nabla_\kappa},\mathcal{V}_{\nabla}$ and $\mathcal{H}$ are all sublinear.

In this work, we concentrate on the study of weak $(1,1)$ boundedness of the square functions defined above. From \eqref{grad-Gamma-comp} and the definition of $\Gamma$, we see that, for every $f\in C^\infty_c(\R^d)$, both $\mathcal{V}_{\nabla_\kappa}(f)$ and $\mathcal{V}_{\nabla}(f)$ are controlled by $\mathcal{V}_\Gamma(f)$ in the pointwise sense. Since the Dunkl heat flow $\{H_\kappa(t)\}_{t\geq0}$  is a symmetric diffusion semigroup in the sense of \cite[Page 65]{Stein70}, $\mathcal{H}$ is always bounded in $L^p(\mu_\kappa)$ for all $p\in (1,\infty)$ as a particular example of \cite[Corollary 1, page 120]{Stein70}. So, it is more interesting to us to study the weak $(1,1)$ boundedness of $\mathcal{V}_\Gamma$ and $\mathcal{H}$.

With these preparations in hand,  we can present our main results in the following theorems. The first one is on the vertical Littlewood--Paley--Stein square function.
\begin{theorem}\label{main-thm-v}
The operator $\mathcal{V}_{\Gamma}$ is weak $(1,1)$ bounded.
\end{theorem}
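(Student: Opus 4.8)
The plan is to follow the classical Calderón--Zygmund strategy adapted to the space of homogeneous type $(\R^d, |\cdot|, \mu_\kappa)$, which is indeed doubling: $\mu_\kappa(B(x,2r)) \lesssim 2^{d + 2\chi}\mu_\kappa(B(x,r))$. Given $f \in C_c^\infty(\R^d) \cap L^1(\mu_\kappa)$ and $\lambda > 0$, I would perform the Calderón--Zygmund decomposition of $f$ at height $\lambda$, writing $f = g + b = g + \sum_i b_i$, where $g \in L^1 \cap L^2$ with $\|g\|_\infty \lesssim \lambda$ and $\|g\|_{L^1(\mu_\kappa)} \lesssim \|f\|_{L^1(\mu_\kappa)}$; each $b_i$ is supported in a ball (or cube) $Q_i$, has $\int b_i \, \d\mu_\kappa = 0$, satisfies $\|b_i\|_{L^1(\mu_\kappa)} \lesssim \lambda\, \mu_\kappa(Q_i)$, and $\sum_i \mu_\kappa(Q_i) \lesssim \lambda^{-1}\|f\|_{L^1(\mu_\kappa)}$. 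The good part is handled by $L^2$ boundedness: since $\mathcal{V}_\Gamma$ is bounded on $L^2(\mu_\kappa)$ (which follows from the spectral theorem and $\int_0^\infty \langle \Gamma(H_\kappa(t)f), \mathbbm{1}\rangle\, \d t = \int_0^\infty \langle -\Delta_\kappa H_\kappa(2t) f, f\rangle \, \d t = \tfrac12 \|f\|_{L^2(\mu_\kappa)}^2$ by integration by parts in $L^2$), Chebyshev gives $\mu_\kappa(\{\mathcal{V}_\Gamma g > \lambda/2\}) \lesssim \lambda^{-2}\|g\|_{L^2}^2 \lesssim \lambda^{-1}\|f\|_{L^1(\mu_\kappa)}$.

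The heart of the matter is the bad part. Writing $\widetilde{Q}_i$ for a fixed dilate of $Q_i$ and $E = \bigcup_i \widetilde{Q}_i$, one has $\mu_\kappa(E) \lesssim \lambda^{-1}\|f\|_{L^1(\mu_\kappa)}$ trivially, so it suffices to estimate $\mu_\kappa(\{x \notin E : \mathcal{V}_\Gamma b(x) > \lambda/2\})$, for which by Chebyshev again it is enough to bound $\int_{\R^d \setminus E} \mathcal{V}_\Gamma b(x)\, \d\mu_\kappa(x) \lesssim \sum_i \int_{\R^d \setminus \widetilde{Q}_i} \mathcal{V}_\Gamma b_i(x)\, \d\mu_\kappa(x)$ by sublinearity. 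Here the essential point is a Hörmander-type integral estimate on the kernel of $\Gamma^{1/2} H_\kappa(t)$, or rather on the whole square-function operator: using the cancellation $\int b_i\, \d\mu_\kappa = 0$, I would write, with $y_i$ the center of $Q_i$ and $r_i$ its radius,
\[
\int_{\R^d \setminus \widetilde{Q}_i} \mathcal{V}_\Gamma b_i(x)\, \d\mu_\kappa(x) \lesssim \int_{Q_i} |b_i(y)| \left( \int_{\R^d \setminus \widetilde{Q}_i} \bigg( \int_0^\infty \Gamma_x\big( K_t(\cdot, y) - K_t(\cdot, y_i)\big)(x)\, \d t \bigg)^{1/2} \d\mu_\kappa(x)\right) \d\mu_\kappa(y),
\]
where $K_t$ is the Dunkl heat kernel (Minkowski's integral inequality moves the $L^2(\d t)$ norm outside). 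So what is needed is the uniform bound
\[
\sup_{|y - y_i| \le r_i}\ \int_{|x - y_i| \ge c\, r_i} \bigg( \int_0^\infty \Gamma_x\big( K_t(\cdot, y) - K_t(\cdot, y_i)\big)(x)\, \d t\bigg)^{1/2}\, \d\mu_\kappa(x) \lesssim 1,
\]
with a constant independent of $i$ and $\lambda$; summing against $\|b_i\|_{L^1} \lesssim \lambda\,\mu_\kappa(Q_i)$ and then over $i$ yields the desired $O(\|f\|_{L^1(\mu_\kappa)})$ bound.

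The main obstacle, and the technical core of the paper, is establishing that last kernel estimate. Unlike the Euclidean heat kernel, the Dunkl heat kernel $K_t(x,y)$ is not a function of $x - y$ alone — it is built from the Dunkl kernel $E_\kappa$ and involves the orbit $\{g y : g \in G\}$ — so one cannot simply differentiate a Gaussian. I would rely on the known Gaussian-type upper bounds and gradient bounds for $K_t$ (in the $\R^d$-metric, with the weight $w_\kappa$ appearing through the doubling volume), together with the representation $\Gamma(u)(x) = |\nabla u(x)|^2 + \sum_{\alpha \in \mathfrak{R}_+} \kappa_\alpha \frac{(u(x) - u(r_\alpha x))^2}{\langle \alpha, x\rangle^2}$ from \eqref{Gamma-op}: the gradient term is controlled by standard heat-kernel gradient estimates plus a mean-value argument in $y$, while the difference term requires care near the walls $\langle \alpha, x\rangle = 0$, where one must exploit both the vanishing of the numerator (via Lipschitz-type bounds on $K_t$ in its first variable, again using the $G$-orbit structure) and the integrability of $\langle\alpha,x\rangle^{-2}$ against $w_\kappa\,\d x$ locally. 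Carrying out the $t$-integral and the $x$-integral with the correct interplay between the two metrics (Euclidean distance versus the "Dunkl distance" $d(x,y) = \min_{g \in G}|x - gy|$) and the doubling measure, uniformly over the scale $r_i$, is where the real work lies; everything else is the standard Calderón--Zygmund machinery over a space of homogeneous type.
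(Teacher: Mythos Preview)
Your outline has a real gap in the treatment of the bad part, and it also differs structurally from the paper's argument.

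\textbf{The gap: the exceptional set is too small.} You remove only the Euclidean dilates $\widetilde{Q}_i$ and then try to control $\int_{|x-y_i|\ge c\,r_i}(\cdots)\,\d\mu_\kappa(x)$. But the Dunkl heat kernel has Gaussian decay only in the orbit distance $\rho(x,y)=\min_{g\in G}|x-gy|$, see \eqref{kernel-bd} and \eqref{time-kernel-bd}; it has no smallness whatsoever at points $x$ that are Euclidean-far from $y_i$ but close to some reflected center $gy_i$, $g\neq e$. For such $x$ and small $t$, both $h_t(x,y)$ and $h_t(x,y_i)$ (and their $\Gamma_x$) are of size $\sim t^{-1/2}\mu_\kappa(B(y_i,\sqrt{t}))^{-1}$, and the cancellation $h_t(\cdot,y)-h_t(\cdot,y_i)$ gains at most a factor $|y-y_i|/\sqrt{t}$, which for $t\ll r_i^2$ is large, not small. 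Hence your displayed H\"ormander-type integral diverges near each $gy_i$. The correct exceptional set is the $G$-orbit of the dilated balls, $\bigcup_i\bigcup_{g\in G} gB(x_i,2r_i)=\bigcup_i B^\rho(x_i,2r_i)$, which still has measure $\lesssim |G|\,\lambda^{-1}\|f\|_{L^1(\mu_\kappa)}$ by $G$-invariance of $\mu_\kappa$; this is exactly what the paper does.

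\textbf{The structural difference.} Even after fixing the exceptional set, your route is not the paper's. You use the mean-zero condition $\int b_i\,\d\mu_\kappa=0$ and a H\"ormander estimate for $\Gamma_x\big(h_t(\cdot,y)-h_t(\cdot,y_i)\big)$, which would require H\"older regularity of the kernel in the \emph{second} variable combined with $\Gamma$-control in the first; this is delicate because of the nonlocal difference terms in \eqref{Gamma-op}. The paper never uses the cancellation of $b_i$. Instead it follows the Coulhon--Duong scheme: write $b_i=H_\kappa(t_i)b_i+[I-H_\kappa(t_i)]b_i$ with $t_i=r_i^2$, absorb $\sum_i H_\kappa(t_i)b_i$ back into $L^2$ via the maximal-function bound \eqref{sup-max}, and for the remaining piece reduce to the \emph{same-$y$} kernel difference $h_s(\cdot,y)-h_{s+t_i}(\cdot,y)=-\int_s^{s+t_i}\Delta_\kappa h_u(\cdot,y)\,\d u$, which is then controlled by the weighted integral estimates of Lemma~\ref{lemma-2} on $\Gamma(\Delta_\kappa h_u(\cdot,y))$. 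This avoids any $y$-regularity of the kernel and replaces it by time-derivative bounds, which are available directly from \eqref{time-kernel-bd}.
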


The second one is on the horizontal Littlewood--Paley--Stein square function.
\begin{theorem}\label{main-thm-h}
The operator $\mathcal{H}$ is weak $(1,1)$ bounded.
\end{theorem}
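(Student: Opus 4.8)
The plan is to prove the weak $(1,1)$ boundedness of $\mathcal{H}$ by the classical Calder\'on--Zygmund machinery, exactly in parallel with the proof of Theorem~\ref{main-thm-v}, treating $\mathcal{H}$ as a vector-valued singular integral with kernel taking values in $L^2((0,\infty),t\,\d t)$. Concretely, for $f\in C_c^\infty(\R^d)$ one writes
\begin{equation*}
\partial_t H_\kappa(t)f(x)=\int_{\R^d}\partial_t h_\kappa(t,x,y)f(y)\,\d\mu_\kappa(y),
\end{equation*}
where $h_\kappa$ is the Dunkl heat kernel, so that $\mathcal{H}(f)(x)=\|K(x,\cdot)f\|$ with $K(x,y)(t)=t^{1/2}\partial_t h_\kappa(t,x,y)$ viewed as an element of the Hilbert space $\mathbb{H}:=L^2((0,\infty),\d t)$. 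Since $\mathcal H$ is bounded on $L^2(\mu_\kappa)$ (being a symmetric diffusion semigroup square function, by \cite[Corollary 1, page 120]{Stein70}), it suffices to establish the Calder\'on--Zygmund--H\"ormander integral condition
\begin{equation*}
\sup_{y,y'\in\R^d}\ \int_{d_\kappa(x,y)>c\,d(y,y')}\big\|K(x,y)-K(x,y')\big\|_{\mathbb H}\,\d\mu_\kappa(x)<\infty,
\end{equation*}
with respect to the correct metric on the Dunkl space; here $d_\kappa(x,y)=\min_{g\in G}|x-gy|$ is the natural $G$-invariant distance, and $\mu_\kappa$ is a doubling measure for this metric, so $(\R^d,d_\kappa,\mu_\kappa)$ is a space of homogeneous type and the abstract Calder\'on--Zygmund theorem applies. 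Once this is verified, the weak $(1,1)$ bound follows by the standard argument: decompose $f=g+b$ at height $\lambda$ via the Calder\'on--Zygmund decomposition adapted to $(\R^d,d_\kappa,\mu_\kappa)$, handle the good part $g$ by the $L^2$ bound and Chebyshev, and handle the bad part $b=\sum_j b_j$ by the H\"ormander condition after throwing away a controlled dilate of each cube/ball.

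The key technical input, which I would borrow wholesale from the proof of Theorem~\ref{main-thm-v} and the heat kernel estimates of \cite{LZ2020} and the references therein (Anker--Dziuba\'nski--Hejna and R\"osler's bounds), is a set of pointwise Gaussian-type estimates for $h_\kappa$ and its derivatives. Specifically I need: an upper bound of the form $|\partial_t h_\kappa(t,x,y)|\lesssim t^{-1}\big(\mu_\kappa(B_\kappa(x,\sqrt t))\big)^{-1}\exp(-c\,d_\kappa(x,y)^2/t)$ together with an extra polynomial gain $(1+d_\kappa(x,y)^2/t)^{-N}$ for arbitrary $N$, and a gradient-in-$y$ bound $|\nabla_y \partial_t h_\kappa(t,x,y)|\lesssim t^{-3/2}\big(\mu_\kappa(B_\kappa(x,\sqrt t))\big)^{-1}\exp(-c\,d_\kappa(x,y)^2/t)$, valid at least in the regime $|y-y'|\le\tfrac12 d_\kappa(x,y)$ where no reflection crosses; near the reflecting hyperplanes one uses instead the integral-type estimates on $h_\kappa$ from \cite{LZ2020} to control the difference quotient $h_\kappa(t,x,y)-h_\kappa(t,x,y')$ directly. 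Feeding the gradient bound into the fundamental theorem of calculus gives, for $d_\kappa(x,y)>2|y-y'|$,
\begin{equation*}
\big\|K(x,y)-K(x,y')\big\|_{\mathbb H}^2=\int_0^\infty t\,\big|\partial_t h_\kappa(t,x,y)-\partial_t h_\kappa(t,x,y')\big|^2\,\d t\lesssim |y-y'|^2\int_0^\infty t\,\big(\text{sup-gradient bound}\big)^2\,\d t,
\end{equation*}
and carrying out the $t$-integral (splitting at $t\sim d_\kappa(x,y)^2$) produces the expected Calder\'on--Zygmund decay $\|K(x,y)-K(x,y')\|_{\mathbb H}\lesssim \dfrac{|y-y'|}{d_\kappa(x,y)}\cdot\dfrac{1}{\mu_\kappa(B_\kappa(x,d_\kappa(x,y)))}$, whose integral over $\{d_\kappa(x,y)>2|y-y'|\}$ is uniformly bounded by the doubling property, giving the H\"ormander condition.

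The main obstacle, as in the vertical case, is the non-locality of the Dunkl heat kernel: $h_\kappa(t,x,y)$ is not a function of $d_\kappa(x,y)$ alone and the naive derivative estimates degenerate as $\langle\alpha,x\rangle\to 0$ or $\langle\alpha,y\rangle\to 0$, so the smoothness/H\"ormander estimate on $K(x,y)-K(x,y')$ cannot be obtained by differentiating under the integral sign in a single stroke. One must split the analysis into the ``local'' region, where $y$ and $y'$ lie in a single Weyl chamber bounded away from the walls relative to their separation and the Euclidean Gaussian estimates with the extra polynomial decay suffice, and the ``reflection'' region near the walls, where one replaces differentiation by the integral-type heat kernel bounds of \cite{LZ2020} (the same ones used to prove Theorem~\ref{main-thm-v}) to estimate the increment $h_\kappa(t,x,y)-h_\kappa(t,x,gy')$ over the finitely many group elements $g$ that can be relevant. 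Handling the book-keeping of these finitely many reflected contributions, and checking that in each the resulting $t$-integral still converges and integrates to a bounded quantity in $x$, is where the real work lies; the rest is the routine Calder\'on--Zygmund argument on the space of homogeneous type $(\R^d,d_\kappa,\mu_\kappa)$. I would also remark that an entirely parallel and perhaps shorter route is to observe that $\mathcal H$ is pointwise dominated by $\mathcal{V}_\Gamma$ up to the heat semigroup's own regularization: writing $t\partial_t H_\kappa(t)f=t\Delta_\kappa H_\kappa(t)f$ and using $H_\kappa(t)=H_\kappa(t/2)H_\kappa(t/2)$ together with integration by parts and the spectral calculus, $\mathcal H(f)$ is controlled by a square function of $\nabla_\kappa H_\kappa(\cdot)f$, hence by $\mathcal V_\Gamma(f)$ via \eqref{grad-Gamma-comp}, so Theorem~\ref{main-thm-h} would follow directly from Theorem~\ref{main-thm-v}; I would present the self-contained Calder\'on--Zygmund proof as the main argument and mention this reduction as a remark.
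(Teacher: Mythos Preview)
Your proposal contains a genuine structural gap and also diverges substantially from the paper's argument.

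First, the assertion that $(\R^d,d_\kappa,\mu_\kappa)$ with $d_\kappa(x,y)=\min_{g\in G}|x-gy|=\rho(x,y)$ is a space of homogeneous type is explicitly disclaimed in the paper: as noted just after the definition of $\rho$, this function does not separate points ($\rho(x,gx)=0$ for every $g\in G$), so it is not a quasi-metric in the Coifman--Weiss sense and the abstract Calder\'on--Zygmund theorem cannot be invoked as a black box. The correct homogeneous-type structure is $(\R^d,|\cdot-\cdot|,\mu_\kappa)$, but then the Dunkl heat kernel fails the standard H\"ormander condition with respect to the Euclidean metric; this is precisely the nonlocal obstruction you identify but do not resolve. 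Your sketch acknowledges that the ``reflection region'' near the walls is ``where the real work lies'' and defers to unspecified integral-type estimates, which is not a proof. The vector-valued Calder\'on--Zygmund route \emph{can} be made to work (the paper cites \cite{DH2020} for this), but it requires the adapted H\"ormander-type conditions developed there, not the standard ones you write down.

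Second, your alternative route---claiming $\mathcal{H}(f)$ is pointwise dominated by $\mathcal{V}_\Gamma(f)$ via spectral calculus and integration by parts---is incorrect: spectral identities relate $L^2$ norms, not pointwise values, and there is no pointwise inequality of the form $t|\Delta_\kappa H_\kappa(t)f(x)|^2\lesssim\Gamma(H_\kappa(t)f)(x)$ in this setting.

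The paper's proof takes an entirely different path that avoids the H\"ormander condition altogether. It repeats verbatim the Coulhon--Duong scheme from the proof of Theorem~\ref{main-thm-v}: Calder\'on--Zygmund decomposition with respect to Euclidean balls, the splitting $b_i=H_\kappa(t_i)b_i+[I-H_\kappa(t_i)]b_i$, control of $\sum_i H_\kappa(t_i)b_i$ via the maximal function estimate \eqref{sup-max}, and control of the remaining piece by integral-type off-diagonal estimates on the kernel. The only change is that in part~\textbf{(3)} one replaces Lemma~\ref{lemma-2} (the weighted $L^2$ bound on $\Gamma(\Delta_\kappa^m h_s(\cdot,y))$) by Lemma~\ref{integral-time-der-bd} (the analogous bound on $|\Delta_\kappa^m h_s(\cdot,y)|^2$), since the integrand in $\mathcal{H}$ involves $|\partial_t H_\kappa(t)f|^2=|\Delta_\kappa H_\kappa(t)f|^2$ rather than $\Gamma(H_\kappa(t)f)$. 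No kernel regularity in the $y$-variable is ever needed.
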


It is well known that square functions, which have been studied intensively, are one of the most fundamental concepts in harmonic analysis and play important roles in probability theory; see e.g. the survey paper \cite{Stein1982} and the book \cite{St1970}. Despite extensive studies of Littlewood--Paley--Stein square functions in various settings in the literature, we recall known results in the Dunkl setting. For $L^p$ boundedness,  see  \cite{Soltani2005} and \cite{LZL2017} in the one dimensional case, and see \cite{Soltani05,AmSi2012} and the recent \cite{LZ2020,DH2020} in high dimensional case. We mention that the results in the joint work \cite{LZ2020} are dimension-free, although restricted to the $\mathbb{Z}_2^d$ case when $p>2$. The weak $(1,1)$ boundedness seems not widely studied.  We should mention that, although the weak $(1,1)$ boundedness of square functions considered in \cite{DH2020} is not presented in the main result, the method via the vector valued Calder\'{o}n--Zygmund theory, which crucially depends on pointwise Dunkl heat kernel estimates and is different from the approach presented below, should imply the weak $(1,1)$ boundedness; see \cite[Proposition 3.1]{DH2020}.

Motivated by \cite{ADH2019} and \cite{CDD}, the idea to prove our Theorems \ref{main-thm-v} and \ref{main-thm-h} is the classic Caldr\'{o}n--Zygmund decomposition and  estimates on the Dunkl heat kernel of integral type. The same idea has been recently employed in \cite{AmHa2020} to prove the weak $(1,1)$ boundedness of the Riesz transform associated to the Dunkl--Schr\"{o}dinger operator $-\Delta_\kappa+V$ with $0\leq V\in L^2_{\loc}(\R^d)$ and to the Dunkl gradient operator $\nabla_\kappa$; see also \cite{AmHa2019} for more details on the Dunkl--Schr\"{o}dinger operator.

The present article is organized as follows. In Section 2, we recall necessary known facts and establish several lemmata that are important to prove our main results. In Section 3, we present the proofs of our main results.

We should point out that the constants $c,C,C',C'',\cdots$, used in what follows, may vary from one location to another.

\section{Preparations}\hskip\parindent
In this section, we recall necessary known facts and present some preliminary results which will be used to prove the main results. Let $B(x,r)$ denote the open ball in $\R^d$ with center $x\in \R^d$ and radius $r>0$, and for every $g\in G$ and every $A\subset\R^d$, let $gA=\{gx\in\R^d: x\in A\}$.

Let $d_\kappa=d+2\chi$.  It is known that $\mu_\kappa$ is $G$-invariant, i.e., for every $g\in G$ and every ball $B\subset\R^d$, $\mu_\kappa(gB)=\mu_\kappa(B)$, and the volume comparison property (see e.g. \cite[(3.2)]{ADH2019}) holds: there is a constant $\theta\geq1$ such that, for every $x\in\R^d$ and every $0<r\leq R<\infty$,
\begin{eqnarray}\label{doubling}
\frac{1}{\theta}\Big(\frac{R}{r}\Big)^{d}\leq\frac{\mu\big(B(x,R)\big)}{\mu\big(B(x,r)\big)}\leq \theta\Big(\frac{R}{r}\Big)^{d_\kappa}.
\end{eqnarray}
However, we do not use the left inequality in the proofs.

Let $\xi\in \R^d$. With respect to $\mu_\kappa$, the following integration-by-parts formula holds: for every $u\in C^1(\R^d)$ and every $v\in C_c^1(\R^d)$,
\begin{eqnarray}\label{anti-sym}
\int_{\R^d}v\D_\xi u\,\d\mu_\kappa=-\int_{\R^d}u \D_\xi v\,\d\mu_\kappa.
\end{eqnarray}
See \cite[Lemma 2.9]{Dunkl1992} and \cite[Proposition 2.1]{Rosler2003}. It is easy to see that \eqref{anti-sym} holds true when $C^1(\R^d)$ is replaced by $\Lip_{\loc}(\R^d)$, the space of locally Lipschitz continuous function on $\R^d$ with respect to the Euclidean distance $|\cdot - \cdot|$. Although we may not expect that the Dunkl operator satisfies the Leibniz rule in general, the following particular case is useful (see e.g. \cite[(2.1)]{Rosler2003} and see \cite[Proposition 6.4.12]{DunklXu2014} for the general situation): for every $u,v\in C^1(\R^d)$ with  at lest one of them being $G$-invariant,
\begin{eqnarray}\label{lebniz}
\D_\xi(uv)=v\D_\xi u+u\D_\xi v.
\end{eqnarray}

For every $x\in\R^d$, let $G(x)=\{gx: g\in G\}$, which denotes the $G$-orbit of $x$. Let
$$\rho(x,y)=\min_{g\in G}|x-gy|,\quad x,y\in\R^d,$$
which is the distance between $G$-orbits $G(x)$ and $G(y)$. Note also that $\rho$ is $G$-invariant for each variable by definition. However, $\rho$ may not be a pseudo-distance (or more standardly called quasi-metric) on $\R^d\times\R^d$  in the sense of \cite[Page 66]{CW1971}, and hence the triple $(\R^d,\rho,\mu_\kappa)$ should not be regarded as a space of homogeneous type studied extensively in harmonic analysis. Moreover, the following small observation is useful. For any point $x_0\in\R^d$, we let $\rho_{x_0}(\cdot)=\rho(x_0,\cdot)$.
\begin{lemma}\label{grad-dist}
For an arbitrarily fixed point $x_0\in\R^d$,
\begin{equation*}\label{grad-dist-1}
|\nabla\rho_{x_0}(x)|\leq1,\quad\mbox{for }\mu_\kappa\mbox{-a.e. }x\in\R^d.
\end{equation*}
\end{lemma}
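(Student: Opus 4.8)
The plan is to prove that $\rho_{x_0}$ is globally Lipschitz with constant $1$ with respect to the Euclidean distance, and then invoke Rademacher's theorem so that the Euclidean gradient exists $\L_d$-a.e., hence $\mu_\kappa$-a.e. (since $\mu_\kappa = w_\kappa\L_d$ is absolutely continuous with respect to $\L_d$), with $|\nabla\rho_{x_0}| \le 1$ at every point of differentiability. First I would fix $x_0$ and recall that $\rho_{x_0}(x) = \min_{g\in G}|x_0 - gx| = \min_{g\in G}|g^{-1}x_0 - x|$, using that each $g\in G \subset O(d)$ is an isometry of $\R^d$. Thus $\rho_{x_0}(\cdot)$ is the minimum of the finitely many functions $x \mapsto |g^{-1}x_0 - x|$, $g\in G$, each of which is $1$-Lipschitz (triangle inequality for $|\cdot|$). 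The minimum of finitely many $1$-Lipschitz functions is again $1$-Lipschitz, so $|\rho_{x_0}(x) - \rho_{x_0}(y)| \le |x-y|$ for all $x,y\in\R^d$.

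Then I would apply Rademacher's theorem: a Lipschitz function on $\R^d$ is differentiable $\L_d$-a.e., and wherever $f$ is differentiable with Lipschitz constant $L$ one has $|\nabla f| \le L$ (take a directional derivative along $\nabla f/|\nabla f|$ and bound the difference quotient by $L$). Applying this with $L=1$ gives $|\nabla\rho_{x_0}(x)| \le 1$ for $\L_d$-a.e.\ $x$. Since $\mu_\kappa$ is absolutely continuous with respect to $\L_d$, the same inequality holds $\mu_\kappa$-a.e.

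There is essentially no obstacle here; the only point to be careful about is that the gradient in the statement is the ordinary Euclidean gradient $\nabla$ (not the Dunkl gradient $\nabla_\kappa$), so no difference-quotient terms enter, and the argument is the standard one for distance functions. The mild subtlety worth a sentence is why it suffices to work $\L_d$-a.e.: this is immediate from $\mu_\kappa \ll \L_d$. One could alternatively phrase the bound without Rademacher by noting that at any point where $\nabla\rho_{x_0}(x)$ exists it must coincide with $\nabla(|g^{-1}x_0-\cdot|)(x)$ for some $g$ achieving the minimum near $x$ (by the envelope principle for finite minima), and each such gradient is a unit vector away from $g^{-1}x_0$; either route closes the proof.
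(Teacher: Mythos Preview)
Your proposal is correct and follows essentially the same route as the paper: establish that $\rho_{x_0}$ is $1$-Lipschitz for the Euclidean distance, apply Rademacher's theorem to get $|\nabla\rho_{x_0}|\le 1$ $\L_d$-a.e., and pass to $\mu_\kappa$-a.e.\ by absolute continuity. The only cosmetic difference is that you justify the $1$-Lipschitz bound by writing $\rho_{x_0}$ as a finite minimum of $1$-Lipschitz functions, whereas the paper simply invokes the definition of $\rho$; the two arguments are equivalent.
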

\begin{proof}
By the definition of $\rho$, we have
$$|\rho_{x_0}(y)-\rho_{x_0}(z)|\leq |y-z|, \quad y,z\in\R^d,$$
which implies that $\rho_{x_0}(\cdot)$ is Lipschitz continuous with respect to $|\cdot-\cdot|$ with Lipschitz constant $1$. Then,  by the well-known  Rademacher theorem,  $\rho_{x_0}(\cdot)$ is differentiable almost everywhere with respect to $\L_d$; furthermore,
 $$|\nabla\rho_{x_0}(x)|\leq1,\quad\mbox{for }\L_d\mbox{-a.e. }x\in\R^d.$$
 Since $\mu_\kappa$ is clearly absolutely continuous with respect to $\L_d$, we complete the proof.
\end{proof}

Let $h_t(x,y)$ be the Dunkl heat kernel of $H_\kappa(t)$, which is a $C^\infty$ function of all variables $x,y\in\R^d$ and $t>0$, and satisfies that
\begin{eqnarray*}
&&\partial_t h_t(x,y)=\Delta_\kappa h_t(\cdot,y)(x),\quad x,y\in\R^d,\,t>0,\\
&&h_t(x,y)=h_t(y,x)>0,\quad x,y\in\R^d,\,t>0,
\end{eqnarray*} and, moreover, there exist positive constants $c,C$ such that
\begin{eqnarray}\label{kernel-bd}
h_t(x,y)\leq \frac{C}{V(x,y,t)}\exp\Big(-c\frac{\rho(x,y)^2}{t}\Big),\quad x,y\in\R^d,\,t>0.
\end{eqnarray}
Here and in what follows, we use the notation
$$V(x,y,r)=\max\big\{\mu_\kappa\big(B(x,r)\big),\mu_\kappa\big(B(y,r)\big)\big\}.$$
See e.g. \cite{Rosler2003} for more details on the Dunkl heat kernel. Recently, the following estimate on time derivative of the Dunkl heat kernel is established in \cite[Theorem 4.1(a)]{ADH2019}: for every nonnegative integer $m$, there exist positive constants $c,C$ such that
\begin{eqnarray}\label{time-kernel-bd}
| \partial_t^m h_t(x,y)|\leq \frac{c}{t^{m}V(x,y,t)}\exp\Big(-C\frac{\rho(x,y)^2}{t}\Big),\quad x,y\in\R^d,\,t>0,
\end{eqnarray}
whose proof employs the integral representation of the Dunkl translation operator first obtained in the paper \cite{Rosler2003a} (see also \cite[Lemma 3.4]{DaiWang2010}). However, we should give a remark here.
\begin{remark}
Although $\rho$ may not be a true metric, by the analyticity of $t\mapsto h_t(x,y)$, estimate \eqref{kernel-bd} and  the right inequality of \eqref{doubling},   it is possible to obtain \eqref{time-kernel-bd} in another way by applying the general result \cite[Theorem 4]{Davies1997} (whose proof does not depend on the metric structure). See also the recent paper \cite{DziPre2018} for the homogeneous space setting.
\end{remark}

Let $|G|$ denote the order of the reflection group $G$. For $x\in\R^d$ and $r\geq0$, define
$$B^\rho(x,r)=\{y\in\R^d: \rho(x,y)<r\},$$
where $B^\rho(x,0):=\{y\in\R^d: \rho(x,y)=0\}$ and it is at most a finite subset of $\R^d$. From the volume comparison property \eqref{doubling} and the Dunkl heat kernel estimate \eqref{kernel-bd}, we can immediately obtain the following lemma. The proof is standard and short, and we present it here for the sake of completeness (see e.g. the proof of \cite[Lemma 2.1]{CoulhonDuong1999}).
\begin{lemma}\label{lemma-1}
For every $\delta>0$, there exists a positive constant $C$ such that
$$\int_{\R^d\setminus B^\rho(y,\sqrt{t})}\exp\Big(-2\delta\frac{\rho(x,y)^2}{s}\Big)\,\d\mu_\kappa(x)\leq C \mu_\kappa\big(B(y,\sqrt{s})\big)e^{-\delta t/s},$$
for every $s>0$, $t\geq0$ and $y\in\R^d$.
\end{lemma}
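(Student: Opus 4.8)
The plan is to slice the region $\R^d\setminus B^\rho(y,\sqrt t)$ into dyadic annuli in the $\rho$-distance and sum a geometric series, using the volume comparison property \eqref{doubling} to control the measure of each annulus. First I would assume $t>0$ (the case $t=0$ being the trivial bound $\int_{\R^d}\exp(-2\delta\rho(x,y)^2/s)\,\d\mu_\kappa(x)\leq C\mu_\kappa(B(y,\sqrt s))$, which follows from the $t>0$ case by monotone convergence, or directly from the same annular decomposition centred at radius $\sqrt s$). For $k\geq0$ set
$$A_k=\big\{x\in\R^d: 2^k\sqrt t\leq \rho(x,y)<2^{k+1}\sqrt t\big\},$$
so that $\R^d\setminus B^\rho(y,\sqrt t)=\bigcup_{k\geq0}A_k$. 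On $A_k$ we have $\exp(-2\delta\rho(x,y)^2/s)\leq\exp(-2\delta 4^k t/s)$, and since $A_k\subset B^\rho(y,2^{k+1}\sqrt t)\subset\bigcup_{g\in G}B(gy,2^{k+1}\sqrt t)$ together with $G$-invariance of $\mu_\kappa$, we get $\mu_\kappa(A_k)\leq |G|\,\mu_\kappa(B(y,2^{k+1}\sqrt t))$.

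Next I would invoke \eqref{doubling} to compare $\mu_\kappa(B(y,2^{k+1}\sqrt t))$ with $\mu_\kappa(B(y,\sqrt s))$. There are two regimes: if $2^{k+1}\sqrt t\geq\sqrt s$, the right inequality of \eqref{doubling} gives $\mu_\kappa(B(y,2^{k+1}\sqrt t))\leq\theta\,(2^{k+1}\sqrt{t/s})^{d_\kappa}\,\mu_\kappa(B(y,\sqrt s))$; if $2^{k+1}\sqrt t<\sqrt s$ then $\mu_\kappa(B(y,2^{k+1}\sqrt t))\leq\mu_\kappa(B(y,\sqrt s))$ by monotonicity, which is even better. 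So in all cases
$$\mu_\kappa(A_k)\leq |G|\,\theta\,\big(1+2^{k+1}\sqrt{t/s}\big)^{d_\kappa}\,\mu_\kappa\big(B(y,\sqrt s)\big).$$
Summing, the integral is bounded by
$$|G|\,\theta\,\mu_\kappa\big(B(y,\sqrt s)\big)\sum_{k\geq0}\big(1+2^{k+1}\sqrt{t/s}\big)^{d_\kappa}e^{-2\delta 4^k t/s}.$$
It remains to show this series is at most $C e^{-\delta t/s}$ with $C$ depending only on $\delta,d_\kappa,|G|,\theta$. Writing $u=t/s\geq0$ and splitting $e^{-2\delta 4^k u}=e^{-\delta u}\cdot e^{-\delta(4^k+(4^k-1))u}\leq e^{-\delta u}e^{-\delta 4^k u}$ (using $4^k\geq1$), one is left with $\sum_{k\geq0}(1+2^{k+1}\sqrt u)^{d_\kappa}e^{-\delta 4^k u}$, and since $x\mapsto (1+2\sqrt u\,x)^{d_\kappa}e^{-\delta u x^2}$ with $x=2^k$ ranging over $\{1,2,4,\dots\}$ is dominated, uniformly in $u>0$, by a convergent series (the polynomial factor $(1+c\sqrt u\,2^k)^{d_\kappa}$ is absorbed by a fraction of the Gaussian $e^{-\delta u 4^k}$ via $\sup_{v\geq0}(1+v)^{d_\kappa}e^{-\delta v^2/2}<\infty$ applied with $v=\sqrt u\,2^k$, up to harmless constants), the sum is bounded by an absolute constant. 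This yields the claim.

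The only mildly delicate point — the "main obstacle", such as it is — is the bookkeeping in the last step: one must make sure the polynomial growth factor $(1+2^{k+1}\sqrt{t/s})^{d_\kappa}$ arising from the doubling estimate is genuinely swallowed by the super-geometric decay $e^{-2\delta 4^k t/s}$ \emph{uniformly in the ratio $t/s$}, so that the resulting constant does not degenerate as $t/s\to0$ or $t/s\to\infty$. This is handled cleanly by peeling off one factor $e^{-\delta t/s}$ first and then using the elementary bound $\sup_{v\geq 0}(1+v)^{d_\kappa}e^{-\eta v^2}=:C_{d_\kappa,\eta}<\infty$ on the remaining terms; everything else is routine.
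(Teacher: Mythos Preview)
Your decomposition is at the wrong scale, and the argument breaks down when $u=t/s$ is small. After you absorb the polynomial factor via $\sup_{v\geq0}(1+v)^{d_\kappa}e^{-\eta v^2}<\infty$ with $v=2^{k+1}\sqrt u$, what remains is (up to constants) the series $\sum_{k\geq0}e^{-c\,4^{k}u}$. This is \emph{not} bounded uniformly in $u>0$: for each fixed $k$ the term tends to $1$ as $u\to0$, and in fact the sum behaves like $\log_4(1/u)$ for small $u$ (all $k$ with $4^{k}u\lesssim1$ contribute essentially $1$). So your claimed uniform bound ``the sum is bounded by an absolute constant'' is false, and the resulting estimate degenerates to $C\log(s/t)\,\mu_\kappa(B(y,\sqrt s))$ as $t/s\to0$, which does not give the lemma. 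You actually anticipated the danger in your last paragraph, but the proposed cure does not work: peeling off $e^{-\delta u}$ only costs you a bounded factor as $u\to0$, it does not tame the logarithmic blow-up coming from having chosen annuli of width $\sqrt t$.

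The paper's proof avoids this by decomposing at scale $\sqrt s$ instead of $\sqrt t$. Concretely, on the domain of integration one has $\rho(x,y)\geq\sqrt t$, so $e^{-2\delta\rho(x,y)^2/s}\leq e^{-\delta t/s}\,e^{-\delta\rho(x,y)^2/s}$; the factor $e^{-\delta t/s}$ comes out, and the remaining integral $\int_{\R^d}e^{-\delta\rho(x,y)^2/s}\,\d\mu_\kappa$ is bounded by $C\mu_\kappa(B(y,\sqrt s))$ via the annuli $\{n\sqrt s\leq\rho(x,y)<(n+1)\sqrt s\}$ together with \eqref{doubling}. Now the series is $\sum_{n\geq0}(n+1)^{d_\kappa}e^{-\delta n^2}$, which is manifestly independent of $t$ and $s$. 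Your argument is easily repaired along the same lines (or, alternatively, treat $t\leq s$ by the $t=0$ bound you already mentioned, since then $e^{-\delta t/s}\geq e^{-\delta}$, and keep your dyadic-in-$\sqrt t$ argument only for $t\geq s$, where $u\geq1$ and $\sum_k e^{-c4^k u}\leq\sum_k e^{-c4^k}<\infty$).
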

\begin{proof} Let ${\rm I}=\int_{\R^d}e^{-\delta\rho(x,y)^2/s}\,\d\mu_\kappa(x)$. Then
\begin{eqnarray*}
{\rm I}&=&\sum_{n=0}^\infty\int_{B^\rho(y,(n+1)\sqrt{s})\setminus B^\rho(y,n\sqrt{s})}e^{-\delta\rho(x,y)^2/s}\,\d\mu_\kappa(x)\\
&\leq&\sum_{n=0}^\infty e^{-\delta n^2}\mu_{\kappa}\big(B^\rho(y,(n+1)\sqrt{s})\big).
\end{eqnarray*}
Since for $x\in\R^d$ and $r>0$,
\begin{eqnarray*}
B^\rho(x,r)&=&\cup_{g\in G}\{y\in\R^d:|x-gy|<r\}
=\cup_{g\in G}g B(x,r),
 \end{eqnarray*}
we have, by the $G$-invariance of $\mu_\kappa$ and the right inequality of \eqref{doubling},
\begin{eqnarray*}
{\rm I}&\leq&\sum_{n=0}^\infty e^{-\delta n^2}\mu_{\kappa}\big(\cup_{g\in G} gB(y,(n+1)\sqrt{s})\big)\leq \sum_{n=0}^\infty e^{-\delta n^2}|G|\mu_{\kappa}\big(B(y,(n+1)\sqrt{s})\big)\\
&\leq&|G|\sum_{n=0}^\infty e^{-\delta n^2}(n+1)^{d_\kappa}\mu_\kappa\big(B(y,\sqrt{s})\big)\leq C\mu_\kappa\big(B(y,\sqrt{s})\big).
\end{eqnarray*}
Thus
$$\int_{\R^d\setminus B^\rho(y,\sqrt{t})}\exp\Big(-2\delta\frac{\rho(x,y)^2}{s}\Big)\,\d\mu_\kappa(x)\leq e^{-\delta t/s}\, {\rm I}
\leq C\mu_\kappa\big(B(y,\sqrt{s})\big)e^{-\delta t/s},$$
which completes the proof of Lemma \ref{lemma-1}.
\end{proof}

The next result is on the integral type of gradient estimate of the Dunkl heat kernel, which is motivated by \cite[Lemma 3.3]{CDD}. However, due to the lack of the Leibniz rule and the chain rule for the Dunkl Laplacian, the method used in the aforementioned reference is no longer directly applicable.
\begin{lemma}\label{lemma-2}
For every nonnegative integer $m$ and for small enough $\epsilon>0$, there exists a positive constant $c_\epsilon$ such that
\begin{eqnarray}\label{int-Gamma-kernel-bd-1}
\int_{\R^d}\Gamma\big(\Delta_{\kappa}^m h_s(\cdot,y)\big)(x)\,\d\mu_\kappa(x)\leq\frac{c_\epsilon}{s^{2m+1}\mu_\kappa\big(B(y,\sqrt{s})\big)},
\end{eqnarray}
and
\begin{eqnarray}\label{int-Gamma-kernel-bd-2}
&&\int_{\R^d\setminus B^\rho(y,\sqrt{t})}\Gamma\big(\Delta_{\kappa}^m h_s(\cdot,y)\big)(x)\exp\Big(\epsilon\frac{\rho(x,y)^2}{s}\Big)\,\d\mu_\kappa(x)
\leq\frac{c_\epsilon \,e^{-\epsilon t/s}}{s^{2m+1}\mu_\kappa\big(B(y,\sqrt{s})\big)},
\end{eqnarray}
for all $y\in\R^d,\,s>0,\,t\geq0$.
\end{lemma}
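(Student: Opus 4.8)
The plan is to reduce the integral of $\Gamma(\Delta_\kappa^m h_s(\cdot,y))$ to a pointwise kernel estimate via an integration-by-parts identity, and then to feed the result into Lemma \ref{lemma-1}. The key point is that, although the Leibniz and chain rules fail for $\Delta_\kappa$, the quadratic form $\Gamma$ still satisfies the \emph{integrated} identity
$$\int_{\R^d}\Gamma(f,g)\,\d\mu_\kappa=-\int_{\R^d}f\,\Delta_\kappa g\,\d\mu_\kappa=-\int_{\R^d}g\,\Delta_\kappa f\,\d\mu_\kappa,$$
which follows directly from the definition of $\Gamma$ together with the integration-by-parts formula \eqref{anti-sym} applied to each $\D_j$ (this is the analogue of the symmetry of $\Delta_\kappa$ in $L^2(\mu_\kappa)$, and needs only enough decay of $f,g$ and their derivatives at infinity, which the Gaussian-type bounds \eqref{kernel-bd} and \eqref{time-kernel-bd} provide). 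To handle the weighted estimate \eqref{int-Gamma-kernel-bd-2}, I will instead apply this with $f=\Delta_\kappa^m h_s(\cdot,y)$ and $g=\varphi\,\Delta_\kappa^m h_s(\cdot,y)$ where $\varphi(x)=\exp(\epsilon\rho(x,y)^2/s)$ is a cutoff-type weight; expanding $\Gamma(f,\varphi f)$ and using the localization of $\Gamma$ (the gradient part obeys the usual Leibniz rule, and the reflection part of $\Gamma(f,\varphi f)$ can be controlled pointwise), one obtains
$$\int_{\R^d}\varphi\,\Gamma(f)\,\d\mu_\kappa\lesssim \Big|\int_{\R^d}\varphi f\,\Delta_\kappa f\,\d\mu_\kappa\Big|+\int_{\R^d}|\nabla\varphi|\,|f|\,\sqrt{\Gamma(f)}\,\d\mu_\kappa + (\text{reflection terms}),$$
and the first term on the right is $\int_{\R^d}\varphi f\,\Delta_\kappa^{m+1}h_s(\cdot,y)\,\d\mu_\kappa$ since $\Delta_\kappa f=\Delta_\kappa^{m+1}h_s(\cdot,y)$.

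Next I would estimate the right-hand side pointwise. By \eqref{time-kernel-bd} together with the identity $\partial_s^m h_s=\Delta_\kappa^m h_s$ (iterating $\partial_s h_s=\Delta_\kappa h_s$), we have
$$|\Delta_\kappa^m h_s(x,y)|=|\partial_s^m h_s(x,y)|\leq \frac{c}{s^m V(x,y,s)}\exp\Big(-C\frac{\rho(x,y)^2}{s}\Big),$$
and similarly $|\Delta_\kappa^{m+1}h_s(x,y)|\leq c\, s^{-(m+1)}V(x,y,s)^{-1}\exp(-C\rho(x,y)^2/s)$. For the weight factor I use $|\nabla\varphi(x)|=\varphi(x)\cdot 2\epsilon\rho(x,y)|\nabla\rho_y(x)|/s\leq 2\epsilon\rho(x,y)\varphi(x)/s$ by Lemma \ref{grad-dist}, and I absorb the polynomial factor $\rho(x,y)/s$ into a slightly worse Gaussian exponent. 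Provided $\epsilon$ is small enough that $\epsilon<C$ (in fact $\epsilon$ must beat twice the exponent lost in the Leibniz expansion and in absorbing $\sqrt{\Gamma(f)}\lesssim s^{-1/2}|\Delta_\kappa^m h|$-type factors), the product of $\varphi$ with the two Gaussian kernels still carries an exponential $\exp(-\delta\rho(x,y)^2/s)$ with $\delta>0$; pulling out $V(x,y,s)^{-1}\geq \mu_\kappa(B(y,\sqrt s))^{-1}$ when integrating in $x$ (and using $V(x,y,s)\geq \mu_\kappa(B(x,\sqrt s))$ at worst, handled by the doubling-type bound as in Lemma \ref{lemma-1}), the remaining integral of $\exp(-\delta\rho(x,y)^2/s)$ over $\R^d$, or over $\R^d\setminus B^\rho(y,\sqrt t)$, is bounded by $C\mu_\kappa(B(y,\sqrt s))$, respectively $C\mu_\kappa(B(y,\sqrt s))e^{-\delta t/s}$, exactly by Lemma \ref{lemma-1}. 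Collecting powers of $s$: we have $s^{-m}$ from one kernel, $s^{-(m+1)}$ from the other (or $s^{-m}\cdot s^{-1}$ from $|\nabla\varphi|$ times $\sqrt{\Gamma}$), and the integration in $x$ contributes the volume factor that cancels one copy of $\mu_\kappa(B(y,\sqrt s))$, leaving the claimed $c_\epsilon\, s^{-(2m+1)}\mu_\kappa(B(y,\sqrt s))^{-1}$ (times $e^{-\epsilon t/s}$ in the second estimate, after renaming $\delta$ back to $\epsilon$).

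The main obstacle, and the place where genuine care is needed, is the \emph{reflection part} of $\Gamma$ in the integration-by-parts step: the naive computation $\Gamma(f,\varphi f)=\tfrac12\varphi\,\Gamma(f)$-type cancellation that works for the $\nabla$ part does not hold for the nonlocal part $\sum_\alpha\kappa_\alpha(f(x)-f(r_\alpha x))(g(x)-g(r_\alpha x))/\langle\alpha,x\rangle^2$, because $\varphi(r_\alpha x)\neq\varphi(x)$ in general. The remedy is that $\varphi$ is \emph{not} $G$-invariant but $\rho_y$ is comparable across a reflection only up to the Euclidean displacement; more usefully, one replaces $\varphi$ by its $G$-symmetrization or simply notes that $|f(x)-f(r_\alpha x)|$ is already controlled, via the mean value theorem along the segment $[x,r_\alpha x]$ and the pointwise bound on $|\nabla_\kappa\Delta_\kappa^m h_s|$ from \eqref{grad-Gamma-comp}, by $|\langle\alpha,x\rangle|$ times a Gaussian — so the whole nonlocal term is dominated pointwise by a constant times a Gaussian of the same type as the local term and contributes no worse than it. A second, more technical obstacle is justifying the integration by parts itself (no boundary terms at infinity and at the reflection hyperplanes $\langle\alpha,x\rangle=0$, where $w_\kappa$ vanishes): this is standard given the $C^\infty$-smoothness of $h_s$ and the Gaussian decay, and one can cite \eqref{anti-sym} with $\Lip_\loc$ entries or approximate by $C_c^\infty$ functions; I would state it as a short remark rather than belabor it.
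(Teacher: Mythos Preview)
Your overall plan---weighted integration by parts against $\varphi(x)=\exp(\epsilon\rho(x,y)^2/s)$ followed by Lemma~\ref{lemma-1}---is the paper's plan, but two of your technical steps are flawed.

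First, contrary to what you write, $\varphi$ \emph{is} $G$-invariant: $\rho(\cdot,y)$ is by definition a minimum over the $G$-orbit, so $\rho(r_\alpha x,y)=\rho(x,y)$ for every $\alpha\in\mathfrak R$. Consequently, in your bilinear expansion the nonlocal part of $\Gamma(f,\varphi f)$ factors as $\varphi$ times the nonlocal part of $\Gamma(f)$, and one gets exactly $\Gamma(f,\varphi f)=\varphi\,\Gamma(f)+f\langle\nabla f,\nabla\varphi\rangle$ with no reflection remainder. Your proposed remedies ($G$-symmetrization of $\varphi$, or comparing $\rho_y$ across a reflection) address a nonexistent problem.

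Second, your fallback argument for the reflection terms---the mean value theorem together with ``the pointwise bound on $|\nabla_\kappa\Delta_\kappa^m h_s|$ from \eqref{grad-Gamma-comp}''---is circular. Inequality \eqref{grad-Gamma-comp} says $|\nabla_\kappa f|^2\le(1+2\chi)\Gamma(f)$, so a pointwise bound on $|\nabla_\kappa f|$ (or on the classical $|\nabla f|$, which the mean value theorem actually needs) would require a pointwise bound on $\Gamma(f)$, which you do not have. Likewise, ``absorbing $\sqrt{\Gamma(f)}\lesssim s^{-1/2}|\Delta_\kappa^m h|$-type factors'' is not a step you can perform pointwise. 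The way both your decomposition and the paper's actually close is by a bootstrap: apply Cauchy--Schwarz to the cross term $\int|\nabla\varphi|\,|f|\,\sqrt{\Gamma(f)}\,\d\mu_\kappa$ (using $|\nabla f|\le\sqrt{\Gamma(f)}$, which follows from \eqref{Gamma-op}), obtain an inequality of the form $J\le C+c\sqrt{J}$ with $J=\int\varphi\,\Gamma(f)\,\d\mu_\kappa$, and solve for $J$. You never set this up.

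For comparison, the paper uses the alternative decomposition $\Gamma(f)=\tfrac12\Delta_\kappa(f^2)-f\Delta_\kappa f$ and integrates each piece against the (also $G$-invariant) weight $\eta\phi_R^2$, with a compactly supported cutoff $\phi_R$. In that route a genuine reflection remainder
\[
\tfrac12\int\sum_{\alpha\in\mathfrak R_+}\kappa_\alpha\,\langle\alpha,\nabla(\eta\phi_R^2)\rangle\,\frac{[f(x)-f(r_\alpha x)]^2}{\langle\alpha,x\rangle}\,\d\mu_\kappa
\]
does appear, and the paper controls it not by any gradient bound but by the difference-quotient estimate
\[
\frac{[f(x)-f(r_\alpha x)]^2}{|\langle\alpha,x\rangle|}\le \frac{c}{s^{2m+1/2}\mu_\kappa\big(B(y,\sqrt s)\big)^2}\exp\Big(-C\frac{\rho(x,y)^2}{s}\Big),
\]
derived from the \emph{time}-derivative bound \eqref{time-kernel-bd} via the method of \cite[(4.12)]{ADH2019}. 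The Cauchy--Schwarz bootstrap is then used exactly as above to absorb the remaining $\sqrt{J_R}$.
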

\begin{proof}
Let $x,y\in\R^d$,  $\epsilon,s,R>0$ and let $m$ be a nonnegative integer. For every $\alpha\in\mathfrak{R}$, let $\alpha=(\alpha_1,\cdots,\alpha_d)$. For convenience, we let $f(x)=\partial_s^m h_s(x,y)$, $\eta(x)=e^{2\epsilon\rho(x,y)^2/s}$. Then $f\in C^\infty(\R^d)$ and $\eta\in \Lip_{\loc}(\R^d)$. Take
$$\phi_R(x)=\min\Big\{1,\Big(3-\frac{|x|}{R}\Big)^+\Big\},\quad x\in\R^d,$$
where for any $a\in\R$, $a^+:=\max\{a,0\}$. Then, $0\leq\phi_R\leq1$ on $\R^d$, $\phi_R=1$ on $B(0,2R)$, $\phi_R=0$ outside $B(0,3R)$; moreover, $\phi_R$ is Lipschitz continuous with respect to $|\cdot - \cdot|$, $G$-invariant, increasing as $R$ grows up and $|\nabla \phi_R|\leq 1/R$. Note that $\eta$ is $G$-invariant. Hence, $\eta\phi_R$ is $G$-invariant and $\eta\phi_R\in\Lip_{\loc}(\R^d)$.  Set
$$J=\int_{\R^d}\Gamma(f)\eta\,\d\mu_\kappa,\quad\quad J_R=\int_{\R^d}\Gamma(f)\eta\phi_R^2\,\d\mu_\kappa,$$
 and
 $$J_{R,1}=\frac{1}{2}\int_{\R^d}\Delta_\kappa(f^2)\eta\phi_R^2\,\d\mu_\kappa,\quad\quad J_{R,2}=-\int_{\R^d}f\langle\nabla_\kappa f,\nabla_\kappa(\eta\phi_R^2)\rangle\,\d\mu_\kappa.$$

 By \eqref{anti-sym} and \eqref{lebniz}, we have
\begin{eqnarray*}
&&J_{R,1}\\
&=&-\frac{1}{2}\sum_{j=1}^d\int_{\R^d}\D_j(f^2)\partial_j(\eta\phi_R^2)\,\d\mu_\kappa\\
&&\times\big[\phi_R^2(x)\partial_j\eta(x)+\eta(x)\partial_j\phi_R^2(x)\big]\,\d\mu_\kappa(x)\\
&=&-\int_{\R^d}\Big[f(x)\langle \nabla f(x),\nabla\eta(x)\rangle+\frac{1}{2}\sum_{\alpha\in\mathfrak{R}_+}\kappa_\alpha\langle\alpha,\nabla\eta(x)\rangle\frac{f^2(x)-f^2(r_\alpha x)}{\langle\alpha,x\rangle}\Big]\phi_R^2(x)\,\d\mu_\kappa(x)\\
&&-\int_{\R^d}\Big[f(x)\langle \nabla f(x),\nabla\phi_R^2(x)\rangle+\frac{1}{2}\sum_{\alpha\in\mathfrak{R}_+}\kappa_\alpha\langle\alpha,\nabla\phi_R^2(x)\rangle\frac{f^2(x)-f^2(r_\alpha x)}{\langle\alpha,x\rangle}\Big]\eta(x)\,\d\mu_\kappa(x),
\end{eqnarray*}
and
\begin{eqnarray*}
&&J_{R,2}\\
&=&-\int_{\R^d}\Big(f(x)\langle \nabla f(x),\nabla\eta(x)\rangle+\sum_{\alpha\in\mathfrak{R}_+}\kappa_\alpha\langle\alpha,\nabla\eta(x)\rangle\frac{f(x)[f(x)-f(r_\alpha x)]}{\langle\alpha,x\rangle}\Big)\phi_R^2(x)\,\d\mu_\kappa(x)\\
&&-\int_{\R^d}\Big(f(x)\langle \nabla f(x),\nabla\phi_R^2(x)\rangle+\sum_{\alpha\in\mathfrak{R}_+}\kappa_\alpha\langle\alpha,\nabla\phi_R^2(x)\rangle\frac{f(x)[f(x)-f(r_\alpha x)]}{\langle\alpha,x\rangle}\Big)\eta(x)\,\d\mu_\kappa(x).
\end{eqnarray*}
Then
\begin{eqnarray}\label{J1-J2}
J_{R,1}-J_{R,2}&=&\frac{1}{2}\int_{\R^d}\sum_{\alpha\in\mathfrak{R}_+}\kappa_\alpha\langle\alpha,\nabla\eta(x)\rangle\frac{[f(x)-f(r_\alpha x)]^2}{\langle \alpha,x\rangle}\phi_R^2(x)\,\d\mu_\kappa(x)\cr
&&+\frac{1}{2}\int_{\R^d}\sum_{\alpha\in\mathfrak{R}_+}\kappa_\alpha\langle\alpha,\nabla\phi_R^2(x)\rangle\frac{[f(x)-f(r_\alpha x)]^2}{\langle \alpha,x\rangle}\eta(x)\,\d\mu_\kappa(x)\cr
&=:&A_R+B_R.
\end{eqnarray}

Combing \eqref{time-kernel-bd} with the same method used to prove \cite[(4.12)]{ADH2019}, we obtain the following estimate
$$\frac{\big[f(x)-f(r_\alpha x)\big]^2}{|\langle \alpha,x\rangle|}\leq\frac{c}{s^{2m+1/2}\mu_\kappa\big(B(y,\sqrt{s})\big)^2}\exp\Big(-{C}\frac{\rho(x,y)^2}{s}\Big).$$
Since $0\leq\phi_R\leq1$ and $|\nabla \phi_R|\leq 1/R$, by Lemma \ref{grad-dist} and Lemma \ref{lemma-1}, we derive that, for small enough $\epsilon$,
\begin{eqnarray}\label{lemma-2-A}
|A_R|&\leq&\frac{c}{s^{2m+1/2}\mu_\kappa\big(B(y,\sqrt{s})\big)^2}\int_{\R^d}\frac{\rho(x,y)}{s}\exp\Big(2\epsilon\frac{\rho(x,y)^2}{s}\Big)
\exp\Big(-{C}\frac{\rho(x,y)^2}{s}\Big)\,\d\mu_\kappa(x)\cr
&\leq&\frac{c}{s^{2m+1}\mu_\kappa\big(B(y,\sqrt{s})\big)^2}\int_{\R^d}\exp\Big(-{(C'-2\epsilon)}\frac{\rho(x,y)^2}{s}\Big)\,\d\mu_\kappa(x)\cr
&\leq&\frac{c}{s^{2m+1}\mu_\kappa\big(B(y,\sqrt{s})\big)},
\end{eqnarray}
and
\begin{eqnarray}\label{lemma-2-B}
|B_R|&\leq&\frac{c}{R s^{2m+1/2}\mu_\kappa\big(B(y,\sqrt{s})\big)^2}\int_{\R^d}\exp\Big(-{(C-2\epsilon)}\frac{\rho(x,y)^2}{s}\Big)\,\d\mu_\kappa(x)\cr
&\leq&\frac{c}{R s^{2m+1/2}\mu_\kappa\big(B(y,\sqrt{s})\big)},
\end{eqnarray}
which tends to $0$ as $R\rightarrow\infty$.  Thus, from \eqref{J1-J2} and \eqref{lemma-2-A}, we have
\begin{eqnarray}\label{lemma-2-1}
|J_{R,1}|&\leq&|J_{R,2}|+|A_R|+|B_R|\cr
&\leq&|J_{R,2}|+\frac{c}{s^{2m+1}\mu_\kappa\big(B(y,\sqrt{s})\big)}+|B_R|.
\end{eqnarray}

To estimate $J_{R,2}$, we deduce that
\begin{eqnarray*}
|J_{R,2}|&=&\Big|\int_{\R^d}f\langle \nabla_\kappa f,\nabla \eta\rangle \phi_R^2\,\d\mu_\kappa + \int_{\R^d}f\langle \nabla_\kappa f,\nabla\phi_R^2\rangle \eta\,\d\mu_\kappa \Big|\\
&\leq&\int_{\R^d}|f||\nabla_\kappa f||\nabla\eta|\phi_R^2\,\d\mu_\kappa + \frac{2}{R}\int_{\R^d}|f||\nabla_\kappa f|\eta\phi_R\,\d\mu_\kappa\\
&=:& J_{R,2,1}+J_{R,2,2}.
\end{eqnarray*}
For the estimation of $ J_{R,2,1}$, we have
\begin{eqnarray*}
 J_{R,2,1}&\leq& \int_{\R^d}|f(x)||\nabla_\kappa f(x)|\frac{4\epsilon\rho(x,y)}{s}\exp\Big(2\epsilon\frac{\rho(x,y)^2}{s}\Big)\phi_R^2(x)\,\d\mu_\kappa(x)\\
&\leq&\frac{c}{\sqrt{s}}\int_{\R^d}|f(x)||\nabla_\kappa f(x)|\exp\Big(\epsilon'\frac{\rho(x,y)^2}{s}\Big)\phi_R(x)\,\d\mu_\kappa(x)\\
&\leq&\frac{c}{\sqrt{s}}\bigg(\int_{\R^d}|f(x)|^2e^{\epsilon''\rho(x,y)^2/s}\,\d\mu_\kappa(x)\bigg)^{1/2}\\
&&\times\bigg(\int_{\R^d}|\nabla_\kappa f(x)|^2e^{2\epsilon\rho(x,y)^2/s}\phi_R^2(x)\,\d\mu_\kappa(x)\bigg)^{1/2},
\end{eqnarray*}
where we used Lemma \ref{grad-dist} and $0\leq\phi_R\leq1$ again in the second inequality, and the Cauchy--Schwarz inequality in the last inequality. By Lemma \ref{lemma-1} and \eqref{time-kernel-bd}, it is easy to see that, for small enough $\epsilon$,
$$\int_{\R^d}|f(x)|^2e^{\epsilon''\rho(x,y)^2/s}\,\d\mu_\kappa(x)\leq\frac{c}{s^{2m}\mu_\kappa\big(B(y,\sqrt{s})\big)}.$$
By the pointwise inequality \eqref{grad-Gamma-comp}, we have
$$\int_{\R^d}|\nabla_\kappa f(x)|^2e^{2\epsilon\rho(x,y)^2/s}\phi_R^2(x)\,\d\mu_\kappa(x)\leq(1+2\chi)J_R.$$
Hence
\begin{eqnarray*}\label{lemma-2-2}
J_{R,2,1}\leq \frac{c \sqrt{J_R}}{\sqrt{s^{2m+1}\mu_\kappa\big(B(y,\sqrt{s})\big)}}.
\end{eqnarray*}
For the estimation of $J_{R,2,2}$, we have
\begin{eqnarray*}\label{}
J_{R,2,2}&\leq& \frac{2}{R}\Big(\int_{\R^d}|f|^2\eta\,\d\mu_\kappa\Big)^{1/2}\Big(\int_{\R^d}|\nabla_\kappa f|^2\eta\phi_R^2\,\d\mu_\kappa\Big)^{1/2}\cr
&\leq& \frac{2}{R}\Big(\int_{\R^d}|f|^2\eta\,\d\mu_\kappa\Big)^{1/2}\Big((1+2\chi)\int_{\R^d}\Gamma(f)\eta\phi_R^2\,\d\mu_\kappa\Big)^{1/2}\cr
&\leq& \frac{2(1+2\chi)}{R^2}\int_{\R^d}|f|^2\eta\,\d\mu_\kappa +\frac{1}{2}J_R\\
&\leq&\frac{c}{R^2 s^{2m}\mu_\kappa\big(B(y,\sqrt{s})\big)}+\frac{1}{2}J_R,
\end{eqnarray*}
where we used \eqref{grad-Gamma-comp}, Lemma \ref{lemma-1}, \eqref{time-kernel-bd} and Young's inequality. Combing the estimates of $J_{R,2,1}$ and $J_{R,2,2}$, we obtain
\begin{eqnarray}\label{lemma-2-2}
|J_{R,2}|\leq \frac{c \sqrt{J_R}}{\sqrt{s^{2m+1}\mu_\kappa\big(B(y,\sqrt{s})\big)}}+ \frac{c}{R^2 s^{2m}\mu_\kappa\big(B(y,\sqrt{s})\big)}+\frac{1}{2}J_R.
\end{eqnarray}

By applying \eqref{time-kernel-bd} and Lemma \ref{lemma-1} again, we get that, for small enough $\epsilon$,
\begin{eqnarray}\label{lemma-2-4}
\bigg|\int_{\R^d}(f\Delta_\kappa f )\eta\phi_R^2\,\d\mu_\kappa\bigg|
&\leq&\int_{\R^d}|\partial_s^m h_s(x,y)||\partial_s^{m+1} h_s(x,y)|e^{2\epsilon\rho(x,y)^2/s}\,\d\mu_\kappa(x)\cr
&\leq&\frac{c}{s^{2m+1}\mu_\kappa\big(B(y,\sqrt{s})\big)}.
\end{eqnarray}

Thus, combing \eqref{lemma-2-1}, \eqref{lemma-2-2} and \eqref{lemma-2-4}, we have
\begin{eqnarray*}\label{lemma-2-5}
J_R&=&\frac{1}{2}\int_{\R^d}\Delta_\kappa(f^2)\eta\phi_R^2\,\d\mu_\kappa-\int_{\R^d}f(\Delta_\kappa f)\eta\phi_R^2\,\d\mu_\kappa\\
&\leq&|J_{R,1}|+\frac{c}{s^{2m+1}\mu_\kappa\big(B(y,\sqrt{s})\big)}\\
&\leq&\frac{C}{s^{2m+1}\mu_\kappa\big(B(y,\sqrt{s})\big)}+\frac{1}{2}J_R+\frac{c \sqrt{J_R}}{\sqrt{s^{2m+1}\mu_\kappa\big(B(y,\sqrt{s})\big)}}\\
&&+\frac{c}{R^2 s^{2m}\mu_\kappa\big(B(y,\sqrt{s})\big)}+|B_R|.
\end{eqnarray*}
By \eqref{lemma-2-B} and the monotone convergence theorem, letting $R\rightarrow\infty$,  we obtain
$$J\leq \frac{C}{s^{2m+1}\mu_\kappa\big(B(y,\sqrt{s})\big)}+\frac{c \sqrt{J}}{\sqrt{s^{2m+1}\mu_\kappa\big(B(y,\sqrt{s})\big)}},$$
which immediately implies that
$$J\leq\frac{C}{s^{2m+1}\mu_\kappa\big(B(y,\sqrt{s})\big)}.$$
We complete the proof of \eqref{int-Gamma-kernel-bd-1}.

Finally, for every $t\geq0$,
\begin{eqnarray*}
&&\int_{\R^d\setminus B^\rho(y,\sqrt{t})}\Gamma\big(\Delta_{\kappa}^mh_s(\cdot,y)\big)(x)
\exp\Big(\epsilon\frac{\rho(x,y)^2}{s}\Big)\,\d\mu_\kappa(x)\\
&=&\int_{\R^d\setminus B^\rho(y,\sqrt{t})}\Gamma\big(\Delta_{\kappa}^mh_s(\cdot,y)\big)(x)\exp\Big(2\epsilon\frac{\rho(x,y)^2}{s}\Big)
\exp\Big(-\epsilon\frac{\rho(x,y)^2}{s}\Big)\,\d\mu_\kappa(x)\\
&\leq&e^{-\epsilon t/s}J\leq\frac{Ce^{-\epsilon t/s}}{s^{2m+1}\mu_\kappa\big(B(y,\sqrt{s})\big)},
\end{eqnarray*}
which completes the proof of \eqref{int-Gamma-kernel-bd-2}.
\end{proof}

Now we should give a remark on the proof of Lemma \ref{lemma-2}.
\begin{remark} Recently, the following pointwise estimate on space-time derivative of the Dunkl heat kernel is established in \cite[Theorem 4.1(c)]{ADH2019}: for every $j=1,\cdots,d$ and every nonnegative integer $m$, there exist positive constants $c,C$ such that
\begin{eqnarray}\label{mixed-kernel-bd}
|\D_j \partial_t^m h_t(\cdot,y)|(x)\leq \frac{c}{t^{m+1/2}V(x,y,\sqrt{t})}\exp\Big(-C\frac{\rho(x,y)^2}{t}\Big),\quad x,y\in\R^d,\,t>0.
\end{eqnarray}
Applying the same method used to obtain \eqref{mixed-kernel-bd} (see the proof of \cite[Theorem 4.1(c)]{ADH2019}), we can obtain the following pointwise gradient bound on the Dunkl heat kernel, which seems stronger than \eqref{mixed-kernel-bd} due to the pointwise bound \eqref{grad-Gamma-comp} and its converse is not true in general. For every nonnegative integer $m$,  there exist positive constants $c_1, c_2$ such that
\begin{eqnarray*}\label{Gamma-kernel-bd}
\sqrt{\Gamma\big(\Delta_\kappa^m h_t(\cdot,y)\big)(x)}\leq \frac{c_1}{t^{m+1/2}V(x,y,\sqrt{t})}\exp\Big(-c_2\frac{\rho(x,y)^2}{t}\Big),\quad x,y\in\R^d,\,t>0.
\end{eqnarray*}
Then, applying Lemma \ref{lemma-1}, we can also obtain Lemma \ref{lemma-2}. This approach seems more straightforward in the present situation.  However, in other settings, for instance on curved spaces, pointwise gradient kernel bounds are not easy to get, which demand geometric conditions usually, for instance, Ricci curvature bounded from below on Riemannian manifolds (see e.g. \cite{JLZ2016} for the more general case on $\RCD$ spaces). Our approach to prove Lemma \ref{lemma-2} above has the advantage that we may establish the gradient kernel bound of integral type, say \eqref{int-Gamma-kernel-bd-1}, even without the pointwise gradient kernel bound.
\end{remark}

In order to obtain the weak $(1,1)$ boundedness of the horizontal square function $\mathcal{H}$, we need the following lemma, which can be easily verified by applying Lemma \ref{lemma-1} with the estimate \eqref{time-kernel-bd} in hand.
\begin{lemma}\label{integral-time-der-bd}
For every nonnegative integer $m$ and for small enough $\epsilon>0$, there exists a positive constant $C_\epsilon$ such that
\begin{eqnarray*}\label{integral-time-der-bd-1}
&&\int_{\R^d\setminus B^\rho(y,\sqrt{t})}|\Delta_{\kappa}^m h_s(\cdot,y)|^2(x)\exp\Big(\epsilon\frac{\rho(x,y)^2}{s}\Big)\,\d\mu_\kappa(x)
\leq\frac{C_\epsilon\,e^{-\epsilon t/s}}{s^{2m}\mu_\kappa\big(B(y,\sqrt{s})\big)},
\end{eqnarray*}
for all $y\in\R^d,\,s>0,\,t\geq0$.
\end{lemma}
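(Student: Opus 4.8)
The plan is to reduce the estimate to the pointwise time--derivative bound \eqref{time-kernel-bd} and then integrate using Lemma \ref{lemma-1}, in exactly the same spirit as the estimate of $J_{R,2,1}$ in the proof of Lemma \ref{lemma-2}. First I would note that, since $\partial_s h_s(x,y)=\Delta_\kappa h_s(\cdot,y)(x)$ and $\Delta_\kappa$ commutes with $\partial_s$, iteration gives $\Delta_\kappa^m h_s(\cdot,y)(x)=\partial_s^m h_s(x,y)$ for every nonnegative integer $m$. Hence \eqref{time-kernel-bd}, together with $V(x,y,\sqrt s)\geq\mu_\kappa\big(B(y,\sqrt s)\big)$, yields the pointwise bound
$$|\Delta_\kappa^m h_s(\cdot,y)(x)|^2\leq\frac{c^2}{s^{2m}\,\mu_\kappa\big(B(y,\sqrt s)\big)^2}\exp\Big(-2C\frac{\rho(x,y)^2}{s}\Big),\quad x,y\in\R^d,\,s>0.$$

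Next, I would insert this into the left-hand side of the asserted inequality and choose $\epsilon>0$ small enough that $2\delta:=2C-\epsilon>0$, so that the exponential factor $\exp(\epsilon\rho(x,y)^2/s)$ is absorbed, leaving
$$\int_{\R^d\setminus B^\rho(y,\sqrt t)}|\Delta_\kappa^m h_s(\cdot,y)(x)|^2\exp\Big(\epsilon\frac{\rho(x,y)^2}{s}\Big)\,\d\mu_\kappa(x)\leq\frac{c^2}{s^{2m}\mu_\kappa\big(B(y,\sqrt s)\big)^2}\int_{\R^d\setminus B^\rho(y,\sqrt t)}\exp\Big(-2\delta\frac{\rho(x,y)^2}{s}\Big)\,\d\mu_\kappa(x).$$
Lemma \ref{lemma-1} applies with this $\delta$ and bounds the last integral by $C\mu_\kappa\big(B(y,\sqrt s)\big)e^{-\delta t/s}$, which produces the desired form $C_\epsilon\,e^{-\delta t/s}/\big(s^{2m}\mu_\kappa(B(y,\sqrt s))\big)$. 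Finally, shrinking $\epsilon$ further if necessary so that $\delta=C-\epsilon/2\geq\epsilon$ (e.g. $\epsilon\leq 2C/3$), we have $e^{-\delta t/s}\leq e^{-\epsilon t/s}$ for all $t\geq0$, $s>0$, and the claim follows.

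The only point that needs attention is the bookkeeping of the exponential constants: one must check that a single choice of $\epsilon$ makes both $2C-\epsilon>0$, so that the Gaussian factor remains integrable after absorbing $e^{\epsilon\rho^2/s}$, and $\delta\geq\epsilon$, so that $e^{-\delta t/s}$ dominates the target decay $e^{-\epsilon t/s}$. Since $C$ in \eqref{time-kernel-bd} is a fixed positive constant independent of $s,t,y$, this is immediate. Apart from this, the argument is a direct application of the pointwise kernel bound and Lemma \ref{lemma-1}, with no analytic difficulty — which is why the statement can indeed be said to follow easily.
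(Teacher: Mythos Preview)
Your proof is correct and follows exactly the approach the paper indicates: the paper itself does not spell out the argument but simply states that the lemma ``can be easily verified by applying Lemma \ref{lemma-1} with the estimate \eqref{time-kernel-bd} in hand,'' and your write-up is precisely that verification. The identification $\Delta_\kappa^m h_s=\partial_s^m h_s$, the pointwise squaring of \eqref{time-kernel-bd}, the absorption of $e^{\epsilon\rho^2/s}$ for small $\epsilon$, and the appeal to Lemma \ref{lemma-1} are all correct and match the intended route.
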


\section{Proofs of the main results}\hskip\parindent
Now we are in a position to prove the main results.
\begin{proof}[Proof of Theorem \ref{main-thm-v}] Let $f\in L^1(\mu_\kappa)$ and $\lambda>0$. By the classical Cader\'{o}n--Zygmund decomposition, we have
$$f=g+\sum_i b_i=:g+b,$$
and the following assertions hold: there exists a positive constant $c$ such that
\begin{itemize}
\item[(a)] $|g(x)|\leq c\lambda$ for $\mu_\kappa$-a.e. $x\in\R^d$,
\item[(b)] there exists a sequence of balls $\{B_i\}_i$ in $\R^d$ with $B_i=B(x_i,r_i)$ such that $r_i\in(0,1]$, $x_i\in\R^d$, $b_i$ is supported in $B_i$ and $\|b_i\|_{L^1(\mu_\kappa)}\leq c\lambda \mu_\kappa(B_i)$ for each $i$,
\item[(c)]  $\sum_{i}\mu_\kappa(B_i)\leq c\lambda^{-1}\|f\|_{L^1(\mu_\kappa)}$,
\item[(d)] every point of $\R^d$ is contained in at most finitely many balls $B_i$.
\end{itemize}

We shall prove that
\begin{eqnarray}\label{proof-main-0}
\mu_\kappa(\{x\in\R^d: \mathcal{V}_{\Gamma}(f)(x)\geq \lambda\})\leq \frac{c}{\lambda}\|f\|_{L^1(\mu_\kappa)}.
\end{eqnarray}

 By $(b),(c)$, we immediately get $\|b\|_{L^1(\mu_\kappa)}\leq\sum_i\|b_i\|_{L^1(\mu_\kappa)}\leq c\|f\|_{L^1(\mu_\kappa)}$, and hence, $\|g\|_{L^1(\mu_\kappa)}\leq c\|f\|_{L^1(\mu_\kappa)}$.

We divide the proof into four parts.

\textbf{(1)} By the sublinearity of $f\mapsto\mathcal{V}_{\Gamma}(f)$ and the decomposition of $f$, we have
\begin{eqnarray}\label{proof-main-1}
&&\mu_\kappa(\{x\in\R^d: \mathcal{V}_{\Gamma}(f)(x)\geq \lambda\})\cr
&\leq& \mu_\kappa(\{x\in\R^d: \mathcal{V}_{\Gamma}(g)(x)\geq \lambda/2\})+\mu_\kappa(\{x\in\R^d: \mathcal{V}_{\Gamma}(b)(x)\geq \lambda/2\}).
\end{eqnarray}
Since $\mathcal{V}_\Gamma$ is bounded in $L^2(\mu_\kappa)$ (see \cite[Theorem 2.4]{LZ2020}), by (a) and Chebyshev's inequality, we have
\begin{eqnarray}\label{proof-main-2}
\mu_\kappa(\{x\in\R^d: \mathcal{V}_{\Gamma}(g)(x)\geq \lambda/2\})&\leq& \frac{c}{\lambda^2}\|\mathcal{V}_{\Gamma}(g)\|_{L^2(\mu_\kappa)}^2\leq
\frac{c}{\lambda^2}\|g\|_{L^2(\mu_\kappa)}^2\cr
&\leq&\frac{c}{\lambda}\|f\|_{L^1(\mu_\kappa)}.
\end{eqnarray}

\textbf{(2)} Let $t_i=r_i^2$ and $I$ be the identity map. Since
\begin{eqnarray*}
\mathcal{V}_{\Gamma}(b_i)&=&\mathcal{V}_{\Gamma}\big(H_\kappa(t_i)b_i+ [I-H_\kappa(t_i)]b_i\big)\\
&\leq&\mathcal{V}_{\Gamma}\big(H_\kappa(t_i)b_i)+\mathcal{V}_{\Gamma}\big([I-H_\kappa(t_i)]b_i),
\end{eqnarray*}
we have
\begin{eqnarray*}
\mathcal{V}_{\Gamma}(b)=\mathcal{V}_{\Gamma}\Big(\sum_i b_i\Big)\leq\mathcal{V}_{\Gamma}\Big(\sum_i H_\kappa(t_i)b_i\Big)+\sum_i\mathcal{V}_{\Gamma}\big([I-H_\kappa(t_i)]b_i).
\end{eqnarray*}
Then
\begin{eqnarray}\label{proof-main-3}
&&\mu_\kappa(\{x\in\R^d: \mathcal{V}_{\Gamma}(b)(x)\geq \lambda/2\})\cr
&\leq&\mu_\kappa\Big(\Big\{x\in\R^d: \mathcal{V}_{\Gamma}\Big(\sum_i H_\kappa(t_i)b_i\Big)(x)\geq \lambda/4\Big\}\Big)\cr
&&+\mu_\kappa\Big(\Big\{x\in\R^d: \sum_i\mathcal{V}_{\Gamma}\big([I-H_\kappa(t_i)]b_i)(x)\geq \lambda/4\Big\}\Big).
\end{eqnarray}

By the $L^2$ boundedness of $\mathcal{V}_\Gamma$ and Chebyshev's inequality again,
\begin{eqnarray*}
\mu_\kappa\Big(\Big\{x\in\R^d: \mathcal{V}_{\Gamma}\Big(\sum_i H_\kappa(t_i)b_i\Big)(x)\geq \lambda/4\Big\}\Big)
&\leq&\frac{c}{\lambda^2}\Big\|\mathcal{V}_{\Gamma}\Big(\sum_i H_\kappa(t_i)b_i\Big)\Big\|^2_{L^2(\mu_\kappa)}\\
&\leq&\frac{c}{\lambda^2}\Big\|\sum_i H_\kappa(t_i)|b_i|\Big\|^2_{L^2(\mu_\kappa)},
\end{eqnarray*}
where
\begin{eqnarray*}
\Big\|\sum_i H_\kappa(t_i)|b_i|\Big\|_{L^2(\mu_\kappa)}
&=&\sup_{\|u\|_{L^2(\mu_\kappa)}=1}\Big|\int_{\R^d}u\sum_i H_\kappa(t_i)|b_i|\,\d\mu_\kappa\Big|\\
&=&\sup_{\|u\|_{L^2(\mu_\kappa)}=1}\Big|\sum_i\int_{\R^d}|b_i| H_\kappa(t_i)u\,\d\mu_\kappa\Big|\\
&\leq&\sup_{\|u\|_{L^2(\mu_\kappa)}=1}\sum_i\|b_i\|_{L^1(\mu_\kappa)}\big(\sup_{B_i} H_\kappa(t_i)|u|\big).
\end{eqnarray*}
We \textbf{claim} that, for every $t>0$, $x\in\R^d$ and every nonnegative measurable function $v$ defined on $\R^d$,
\begin{eqnarray}\label{sup-max}
\sup_{y\in B(x,\sqrt{t})}\big(H_\kappa(t)v\big)(y)\leq c\sum_{g\in G}\inf_{z\in B(x,\sqrt{t})}\mathcal{M}(v)(gz),
\end{eqnarray}
where $\mathcal{M}$ is the Hardy--Littlewood maximum operator defined as
$$\mathcal{M}(v)(y)=\sup_{r>0}\frac{1}{\mu_\kappa\big(B(y,r)\big)}\int_{B(y,r)}|v(z)|\,\d\mu_\kappa(z),\quad y\in\R^d.$$
By (b), (c), \eqref{sup-max} and the $G$-invariance of $\mu_\kappa$, we have, for every $u\in L^2(\mu_\kappa)$ with $\|u\|_{L^2(\mu_\kappa)}=1$,
\begin{eqnarray*}
\sum_i\|b_i\|_{L^1(\mu_\kappa)}\big(\sup_{B_i} H_\kappa(t_i)|u|\big)
&\leq&c\lambda\sum_i\mu_{\kappa}(B_i)\sum_{g\in G}\inf_{x\in B_i}\mathcal{M}(u)(gx)\\
&\leq&c\lambda\sum_i\sum_{g\in G}\int_{B_i}\mathcal{M}(u)(gx)\,\d\mu_\kappa(x)\\
&\leq&c\lambda\sum_{g\in G}\sqrt{\mu_\kappa\big(\cup_i B_i\big)}~\|\mathcal{M}(u)\|_{L^2(\mu_\kappa)}\\
&\leq&c\sqrt{\lambda\|f\|_{L^1(\mu_\kappa)}},
\end{eqnarray*}
since $\mathcal{M}$ is bounded in $L^2(\mu_\kappa)$. Hence
\begin{eqnarray}\label{proof-main-4}
\mu_\kappa\Big(\Big\{x\in\R^d: \mathcal{V}_{\Gamma}\Big(\sum_i H_\kappa(t_i)b_i\Big)(x)\geq \lambda/4\Big\}\Big)
\leq \frac{c}{\lambda}\|f\|_{L^1(\mu_\kappa)}.
\end{eqnarray}

Now we start to prove the claim, i.e., \eqref{sup-max}. Let $y\in B(x,\sqrt{t})$. By \eqref{kernel-bd}, we have
\begin{eqnarray*}
\big(H_\kappa(t)v\big)(y)&\leq& C\int_{\R^d}\frac{e^{-c\rho(y,z)^2/t}}{\mu_\kappa\big(B(y,\sqrt{t})\big)}v(z)\,\d\mu_\kappa(z)\\
&\leq& C\sum_{g\in G}\int_{\R^d}\frac{e^{-c|gy-z|^2/t}}{\mu_\kappa\big(B(gy,\sqrt{t})\big)}v(z)\,\d\mu_\kappa(z).
\end{eqnarray*}
For any fixed $g\in G$, let $E_1=B(gx,4\sqrt{t})$ and $E_j=B(gx,2^{j+1}\sqrt{t})\setminus  B(gx,2^j\sqrt{t})$, for $j=2,3,\cdots$. Since $y\in B (x,\sqrt{t})$, we see that for any $z\in E_j$, $|y-x|<\sqrt{t}$, $2^j\sqrt{t}\leq|gx-z|<2^{j+1}\sqrt{t}$, $j=1,2,\cdots$. Then the triangular inequality implies that $|gy-z|\geq|z-gx|-|g(y-x)|=|z-gx|-|y-x|\geq 2^{j-1}\sqrt{t}$, $j=1,2,\cdots$. Thus, for every $y\in B(x,\sqrt{t})$, since $B(gx,2^{j+1}\sqrt{t})\subset B(gy,2^{j+1}\sqrt{t}+|g(x-y)|)\subset B(gy,2^{j+2}\sqrt{t})$, we have
\begin{eqnarray*}
\big(H_\kappa(t)v\big)(y)&\leq& C\sum_{g\in G}\sum_{j=1}^\infty
\int_{E_j}\frac{e^{-c4^{j-1}}}{\mu_\kappa\big(B(gy,\sqrt{t})\big)}v(z)\,\d\mu_\kappa(z)\\
&\leq&C\sum_{g\in G}\sum_{j=1}^\infty e^{-c4^{j-1}}\frac{\mu_\kappa\big(B(gy,2^{j+2}\sqrt{t})\big)}{\mu_\kappa\big(B(gy,\sqrt{t})\big)}\times\\
&&\quad\frac{1}{\mu_\kappa\big(B(gx,2^{j+1}\sqrt{t})\big)}\int_{B(gx,2^{j+1}\sqrt{t})}v(z)\,\d\mu_\kappa(z)\\
&\leq& C\sum_{g\in G}\sum_{j=1}^\infty e^{-c4^{j-1}}  2^{(j+2)d_\kappa}\inf_{z\in B(x,\sqrt{t})}\mathcal{M}(v)(g z)\\
&\leq& C\sum_{g\in G}\inf_{z\in B(x,\sqrt{t})}\mathcal{M}(v)(g z),
\end{eqnarray*}
where the right inequality of \eqref{doubling} is used. We complete the proof of the claim.

\textbf{(3)} It remains to estimate the last term of \eqref{proof-main-3}. For notational simplicity, for each $l$, we let $2B_l^\rho=B^\rho(x_l,2\sqrt{t_l})$ and $(2B_l^\rho)^c=\R^d\setminus 2B_l^\rho$ in the following proof. Then
\begin{eqnarray*}
&&\mu_\kappa\Big(\Big\{x\in\R^d: \sum_i\mathcal{V}_{\Gamma}\big([I-H_\kappa(t_i)]b_i)(x)\geq \lambda/4\Big\}\Big)\\
&\leq&\sum_l\mu_\kappa(2B_l^\rho)+\mu_\kappa\Big(\Big\{x\in\cap_l (2B_l^\rho)^c: \sum_i\mathcal{V}_{\Gamma}\big([I-H_\kappa(t_i)]b_i)(x)\geq \lambda/4\Big\}\Big)\\
&=:&\sum_l\mu_\kappa(2B_l^\rho)+J.
\end{eqnarray*}
Note that $2B_l^\rho=\cup_{g\in G}gB(x_l,2\sqrt{t_l})$. Since $\mu_\kappa$ is $G$-invariant, by (c) and the right inequality in \eqref{doubling}, we derive that
\begin{eqnarray*}
&&\sum_l\mu_\kappa(2B_l^\rho)\leq \sum_l|G|\mu_\kappa\big(B(x_l,2\sqrt{t_l})\big)\\
&\leq&\sum_l|G| 2^{d_\kappa}\mu\big(B(x_l,r_l)\big)\leq \frac{c}{\lambda}\|f\|_{L^1(\mu_\kappa)}.
\end{eqnarray*}
Since $b_i$ is supported in $B_i$ for each $i$ by (b), it is easy to see that
\begin{eqnarray*}
J&\leq&\frac{4}{\lambda}\sum_i\int_{\cap_l(2B^\rho_l)^c}\mathcal{V}_\Gamma\big([I-H_\kappa(t_i)]b_i\big)\,\d\mu_\kappa\\
&=&\frac{4}{\lambda}\sum_i\int_{\cap_l(2B^\rho_l)^c}\!\!\bigg(\int_0^\infty \Gamma\Big(\int_{B_i}[h_s(\cdot,y)-h_{s+t_i}(\cdot,y)]b_i(y)\,\d\mu_\kappa(y)\Big)(x)\,\d s\bigg)^{1/2}\,\d\mu_\kappa(x)\\
&\leq&\frac{4\sqrt{2}}{\lambda}\sum_i \int_{B_i} \int_{(2B^\rho_i)^c}\!\! \Big(\int_0^\infty\Gamma\big( h_s(\cdot,y)- h_{s+t_i}(\cdot,y)\big)(x)\,\d s\Big)^{1/2}\,\d\mu_\kappa(x)   |b_i(y)|\,\d\mu_\kappa(y),
\end{eqnarray*}
where the last inequality can be check directly by the explicit express of $\Gamma$ (see \eqref{Gamma-op}). For each $i$ and every $y\in\R^d$, let
$$J_i(y)= \int_{(2B_i^\rho)^c}\Big(\int_0^\infty \Gamma\big( h_s(\cdot,y)- h_{s+t_i}(\cdot,y)\big)(x) \,\d s\Big)^{1/2}\,\d\mu_\kappa(x).$$
Then
$$J\leq\frac{c}{\lambda}\sum_i \int_{B_i}J_i(y) |b_i(y)|\,\d\mu_\kappa(y). $$
So, by (b) and (c), it suffices to prove that, there exists a positive constant $c$ such that, for each $i$,
\begin{eqnarray*}\label{proof-main-J}\sup_{y\in B_i}J_i(y)\leq c.\end{eqnarray*}

For $m=0,1,2,\cdots$ and $y\in\R^d$, let
\begin{eqnarray*}
J_i^m(y)=\int_{(2B_i^\rho)^c}\Big(\int_{mt_i}^{(m+1)t_i} \Gamma\big( h_s(\cdot,y)- h_{s+t_i}(\cdot,y)\big)(x) \,\d s\Big)^{1/2}\,\d\mu_\kappa(x).
\end{eqnarray*}

\textbf{(i)} Firstly, for $m=1,2,\cdots$ and $y\in B_i$, we estimate $J_i^m(y)$. By the Cauchy--Schwarz inequality, we get
\begin{eqnarray}\label{proof-main-3-1}
J_i^m(y)&=&\int_{(2B_i^\rho)^c}\!\!\bigg(\int_{mt_i}^{(m+1)t_i}\Gamma\big( h_s(\cdot,y)- h_{s+t_i}(\cdot,y)\big)(x)\exp\Big(2\delta\frac{\rho(x,x_i)^2}{mt_i}\Big)\,\d s\bigg)^{1/2}\cr
&&\times\exp\Big(-\delta\frac{\rho(x,x_i)^2}{mt_i}\Big)\,\d\mu_\kappa(x)\cr
&\leq&\sqrt{\tilde{J}_i^m(y)}~\Big\{\int_{(2B_i^\rho)^c} \exp\Big(-2\delta\frac{\rho(x,x_i)^2}{mt_i}\Big)\,\d\mu_\kappa(x)\Big\}^{1/2},
\end{eqnarray}
where we have set
$$\tilde{J}_i^m(y)=\int_{\R^d}\int_{mt_i}^{(m+1)t_i}\Gamma\big( h_s(\cdot,y)- h_{s+t_i}(\cdot,y)\big)(x)\exp\Big(2\delta\frac{\rho(x,x_i)^2}{mt_i}\Big)\,\d s\,\d\mu_\kappa(x).$$
Lemma \ref{lemma-1} implies that
\begin{eqnarray}\label{proof-main-3-2}
\int_{(2B_i^\rho)^c} \exp\Big(-2\delta\frac{\rho(x,x_i)^2}{mt_i}\Big)\,\d\mu_\kappa(x)\leq c\mu\big(B(x_i,\sqrt{mt_i})\big)e^{-\delta/(m\sqrt{t_i})}.
\end{eqnarray}
Since
$$\partial_u h_u(x,y)=\Delta_\kappa h_u(\cdot,y)(x),$$
we have
\begin{eqnarray*}
\tilde{J}_i^m(y)&=&\int_{\R^d}\!\int_{mt_i}^{(m+1)t_i}\Gamma\bigg(\int_s^{s+t_i}  \Delta_\kappa h_u(\cdot,y)\,\d u\bigg)(x)\exp\Big(2\delta\frac{\rho(x,x_i)^2}{mt_i}\Big)\,\d s\,\d\mu_\kappa(x)\\
&\leq&\int_{\R^d}\!\int_{mt_i}^{(m+1)t_i}\Big(t_i\int_s^{s+t_i}\Gamma\big(\Delta_\kappa h_u(\cdot,y)\big)(x) \,\d u\Big)\exp\Big(2\delta\frac{\rho(x,x_i)^2}{mt_i}\Big)\,\d s\,\d\mu_\kappa(x)\\
&=&t_i\int_{mt_i}^{(m+1)t_i}\!\!\int_s^{s+t_i}\bigg( \int_{\R^d}\Gamma\big(\Delta_\kappa h_u(\cdot,y))(x)\exp\Big(2\delta\frac{\rho(x,x_i)^2}{mt_i}\Big)\,\d\mu_\kappa(x)\bigg)\,\d u\,\d s,
\end{eqnarray*}
where we applied the Cauchy--Schwarz inequality in the second inequality and Fubini's theorem in the last equality. Since $s\leq u\leq s+t_i$, $mt_i\leq s\leq (m+1)t_i$, we get $t_i^{-1} \leq(m+2)u^{-1}$. Since $y\in B_i$, and for every $g\in G$, $|gx-x_i|\leq|gx-y|+|y-x_i|$, we have
$$\rho(x,x_i)\leq\rho(x,y)+|y-x_i|<\rho(x,y)+\sqrt{t_i}.$$
Hence
\begin{eqnarray*}
\tilde{J}_i^m(y)&\leq&t_i\int_{mt_i}^{(m+1)t_i}\!\!\!\int_s^{s+t_i}\!\!\Big[ \int_{\R^d}\Gamma\big(\Delta_\kappa h_u(\cdot,y))(x)\exp\Big(2\delta\frac{2\rho(x,y)^2+2t_i}{mt_i}\Big)\,\d\mu_\kappa(x)\Big]\,\d u\,\d s\\
&\leq& c t_i\int_{mt_i}^{(m+1)t_i}\!\!\!\int_s^{s+t_i}\!\!\Big[ \int_{\R^d}\Gamma\big(\Delta_\kappa h_u(\cdot,y))(x)\exp\Big(12\delta\frac{\rho(x,y)^2}{u}\Big)\,\d\mu_\kappa(x)\Big]\,\d u\,\d s.
\end{eqnarray*}
Applying Lemma \ref{lemma-2}, we deduce that, for small enough $\delta>0$,
\begin{eqnarray}\label{proof-main-3-3}
\tilde{J}_i^m(y)&\leq& c t_i\int_{mt_i}^{(m+1)t_i}\!\!\!\int_s^{s+t_i} \frac{1}{u^3\mu\big(B(y,\sqrt{u})\big)}\,\d u\,\d s\cr
&\leq&c t_i^2\int_{mt_i}^{(m+1)t_i} \frac{1}{s^3\mu\big(B(y,\sqrt{s})\big)}\,\d s\cr
&\leq&\frac{c}{m^3\mu\big(B(y,\sqrt{mt_i})\big)}.
\end{eqnarray}
Thus, combining \eqref{proof-main-3-1}, \eqref{proof-main-3-2} and \eqref{proof-main-3-3} with $B(x_i,\sqrt{mt_i})\subset B(y,\sqrt{mt_i}+|x_i-y|)\subset B(y,(\sqrt{m}+1)t_i)$, $y\in B_i$, we have,  by \eqref{doubling},
\begin{eqnarray}\label{proof-main-3-4}
J_i^m(y)&\leq& c\bigg(\frac{\mu\big(B(x_i,\sqrt{mt_i})\big)e^{-\delta/(m\sqrt{t_i})}}{m^3\mu\big(B(y,\sqrt{mt_i})\big)}\bigg)^{1/2}\cr
&\leq&\frac{c}{m^{3/2}}\Big(\frac{1+\sqrt{m}}{\sqrt{m}}\Big)^{d_\kappa/2}\leq\frac{c}{m^{3/2}},
\end{eqnarray}
for every $y\in B_i$ and $m=1,2,\cdots.$

\textbf{(ii)} Secondly, for $y\in B_i$, we estimate $J_i^0(y)$. Similar as the the approach to estimate $J_i^m(y)$ in \textbf{(i)}, we have
\begin{eqnarray*}
J_i^0(y)\leq\sqrt{\tilde{J}_i^0(y)}~\Big\{\int_{(2B_i^\rho)^c} \exp\Big(-2\delta\frac{\rho(x,x_i)^2}{t_i}\Big)\,\d\mu_\kappa(x)\Big\}^{1/2},
\end{eqnarray*}
where  we have let
\begin{eqnarray*}
\tilde{J}_i^0(y)=\int_{(2B_i^\rho)^c}\int_{0}^{t_i}\Gamma\big( h_s(\cdot,y)(x)- h_{s+t_i}(\cdot,y)\big)(x)\exp\Big(2\delta\frac{\rho(x,x_i)^2}{t_i}\Big)\,\d s\,\d\mu_\kappa(x).
\end{eqnarray*}
Again, by Lemma \ref{lemma-1},
\begin{eqnarray*}\label{proof-main-3-5}
\int_{(2B_i^\rho)^c} \exp\Big(-2\delta\frac{\rho(x,x_i)^2}{t_i}\Big)\,\d\mu_\kappa(x)\leq c\mu(B_i).
\end{eqnarray*}
Note that $y\in B_i$. Since for every $g\in G$, $|gx-x_i|\leq|gx-y|+|y-x_i|$, we have
$$\rho(x,x_i)\leq\rho(x,y)+|y-x_i|<\rho(x,y)+\sqrt{t_i}.$$
Then, for every $x\in(2B_i^\rho)^c$, $2\sqrt{t_i}\leq\rho(x,x_i)\leq\rho(x,y)+\sqrt{t_i}$; hence, $\rho(x,y)\geq\sqrt{t_i}$, which implies that $(2B_i^\rho)^c\subset\R^d\setminus B^\rho(y,\sqrt{t_i}).$ Hence
\begin{eqnarray*}
\tilde{J}_i^0(y)&\leq& t_i\int_0^{t_i}\!\!\!\int_{s}^{s+t_i}\!\!\!\int_{(2B_i^\rho)^c}\Gamma\big(\Delta_\kappa h_u(\cdot,y)\big)(x)\exp\Big(2\delta\frac{\rho(x,x_i)^2}{t_i}\Big)\,\d\mu_\kappa(x)\,\d u\,\d s\\
&\leq& t_i\int_0^{t_i}\!\!\!\int_{s}^{s+t_i}\!\!\!\int_{(2B_i^\rho)^c}\Gamma\big(\Delta_\kappa h_u(\cdot,y)\big)(x)\exp\Big(2\delta\frac{2\rho(x,y)^2+2t_i}{t_i}\Big)\,\d\mu_\kappa(x)\,\d u\,\d s\\
&\leq&ct_i\int_0^{t_i}\!\!\!\int_{s}^{s+t_i}\!\!\!\int_{\R^d\setminus B^\rho(y,\sqrt{t_i})}\Gamma\big(\Delta_\kappa h_u(\cdot,y)\big)(x)\exp\Big(8\delta\frac{\rho(x,y)^2}{u}\Big)\,\d\mu_\kappa(x)\,\d u\,\d s.
\end{eqnarray*}
By Lemma \ref{lemma-2} again, for small enough $\delta>0$, we have
$$\int_{\R^d\setminus B^\rho(y,\sqrt{t_i})}\Gamma\big(\Delta_\kappa h_u(\cdot,y)\big)(x)\exp\Big(8\delta\frac{\rho(x,y)^2}{u}\Big)\,\d\mu_\kappa(x)
\leq\frac{ce^{-ct_i/u}}{u^3\mu_\kappa\big(B(y,\sqrt{u})\big)}.$$
Hence
\begin{eqnarray*}\label{proof-main-3-6}
\tilde{J}_i^0(y)&\leq&ct_i\int_0^{t_i}\!\!\!\int_{s}^{s+t_i} \frac{e^{-ct_i/u}}{u^3\mu_\kappa(B(y,\sqrt{u}))}   \,\d u\,\d s\\
&=&\frac{c}{t_i^2\mu_\kappa\big(B(y,\sqrt{t_i})\big)}\int_0^{t_i}\!\!\!\int_{s}^{s+t_i} \Big(\frac{t_i}{u}\Big)^3\frac{\mu_\kappa\big(B(y,\sqrt{t_i})\big)}{\mu_\kappa\big(B(y,\sqrt{u})\big)}e^{-ct_i/u} \,\d u\,\d s\\
&\leq&\frac{c}{t_i^2\mu_\kappa\big(B(y,\sqrt{t_i})\big)}\int_0^{t_i}\!\!\!\int_{s}^{s+t_i}
\Big(\frac{t_i}{u}\Big)^{3+d_\kappa/2}e^{-ct_i/u}\,\d u\,\d s\\
&\leq&\frac{c}{\mu_\kappa\big(B(y,\sqrt{t_i})\big)},
\end{eqnarray*}
where the last inequality is due to the fact that $\R_+\ni t\mapsto t^{3+d_\kappa/2}e^{-ct}$ is bounded. Thus, by \eqref{doubling}, since $B_i\subset B(y, \sqrt{t_i}+|y-x_i|)\subset B(y,2\sqrt{t_i})$, $y\in B_i$, we have
\begin{eqnarray}\label{proof-main-3-7}
J_i^0(y)\leq c \bigg(\frac{\mu_\kappa(B_i)}{\mu_\kappa\big(B(y,\sqrt{t_i})\big)}\bigg)^{1/2}\leq c2^{d_\kappa/2},\quad y\in B_i.
\end{eqnarray}

Thus, from \eqref{proof-main-3-4} and \eqref{proof-main-3-7}, we obtain that for each $i$,
$$\sup_{y\in B_i}J_i(y)\leq\sum_{m=0}^\infty \sup_{y\in B_i}J_i^m(y)\leq c\Big(1+\sum_{m=1}^\infty \frac{1}{m^{3/2}}\Big)\leq c,$$
which implies that
\begin{eqnarray}\label{proof-main-5}
\mu_\kappa\Big(\Big\{x\in\R^d: \sum_i\mathcal{V}_{\Gamma}\big([I-H_\kappa(t_i)]b_i)(x)\geq \lambda/4\Big\}\Big)\leq\frac{ c}{\lambda}\|f\|_{L^1(\mu_\kappa)}.
\end{eqnarray}

\textbf{(4)} Therefore, combining \eqref{proof-main-1}, \eqref{proof-main-2}, \eqref{proof-main-3}, \eqref{proof-main-4} and \eqref{proof-main-5}, we obtain \eqref{proof-main-0}. The proof of Theorem \ref{main-thm-v} is completed.
\end{proof}

Now Theorem \ref{main-thm-h} can be proved by applying the same method used in the proof of Theorem \ref{main-thm-v}. The main difference lies in part \textbf{(3)} in the above proof, where Lemma \ref{integral-time-der-bd} should be employed instead of Lemma \ref{lemma-2}. We omit details here to save some space.

\subsection*{Acknowledgment}\hskip\parindent
The author would like to thank his colleague Mingfeng Zhao for helpful discussions, and to acknowledge the financial support from the National Natural Science Foundation of China (Grant No. 11831014).


\begin{thebibliography}{a23}
\bibitem{AmSi2012}
B. Amri, M. Sifi: Singular integral operators in Dunkl setting. J. Lie Theory 22 (2012), 723--739.

\bibitem{AmHa2019}
B. Amri, A. Hammi: Dunkl--Schr\"{o}dinger operators. Complex Anal. Oper. Theory 13 (2019), 1033--1058.

\bibitem{AmHa2020}
B. Amri, A. Hammi: Semigroup and Riesz transform for the Dunkl--Schr\"{o}dinger operators. Semigroup Forum 101 (2020), 507--533.

\bibitem{Anker2017}
J.-P. Anker: An introduction to Dunkl theory and its analytic aspects. In \emph{Analytic, algebraic and geometric aspects of differential equations}. Trends in Math., Birkh\"{a}user, Cham, 2017, pp. 3--58.

\bibitem{ADH2019}
J.-P. Anker, J. Dziuba\'{n}ski, A. Hejna: Harmonic Functions, Conjugate Harmonic Functions and the Hardy Space $H^1$ in the Rational Dunkl Setting. J. Fourier Anal. Appl. 25 (2019), 2356--2418.

\bibitem{BE1985} D. Bakry, M. Emery: Diffusions hypercontractives. In \emph{S\'{e}minaire de probabilit\'{e}s XIX, 1983/84}. Lecture Notes in Math. vol. 1123, Springer, Berlin, 1985, pp.177--206.

\bibitem{CW1971}
R. Coifman, G. Weiss: \emph{Analyse Harmornique Non-commutative sur Certains Espaces Homogenes}. Lecture Notes in Math. vol. 242, Springer-Verlag,
Berlin-New York, 1971.

\bibitem{CoulhonDuong1999}
T. Coulhon, X.T. Duong: Riesz transforms for $1\leq p\leq2$. Trans. Amer. Math. Soc. 351 (1999), 1151--1169.

\bibitem{CDD}
T. Coulhon, X.T. Duong, X.D. Li: Littlewood--Paley--Stein functions on complete Riemannian manifolds for $1\le p\le 2$. Studia Math. 154 (2003), 37--57.

\bibitem{Davies1997}
E.B. Davies: Non-Gaussian aspects of heat kernel behaviour. J. London Math. Soc. 55(1) (1997), 105--125.


\bibitem{DaiWang2010}
 F. Dai, H. Wang: A transference theorem for the Dunkl transform and its applications. J. Funct. Anal. 258 (2010), 4052--4074.

\bibitem{DX2015}
F. Dai, Y. Xu: \emph{Analysis on $h$-harmonics and Dunkl transforms}. Advanced Courses in Mathematics CRM Barcelona, edited by Sergey Tikhonov,  Birkh\"{a}user/Springer, Basel, 2015.

\bibitem{Dunkl1988}
C.F. Dunkl: Reflection groups and orthogonal polynomials on the sphere. Math. Z. 197 (1988), 33--60.

\bibitem{Dunkl1989}
C.F. Dunkl: Differential-difference operators associated to reflection groups. Trans. Amer. Math. Soc. 311 (1989), 167--183.

\bibitem{Dunkl1992}
C.F. Dunkl: Hankel transforms associated to finite reflection groups. In \emph{Hypergeometric functions on domains of positivity, Jack polynomials, and applications}. Tampa, FL, 1991. Contemp. Math. 138,  Amer. Math. Soc., Providence, RI, 1992, pp. 123--138.

\bibitem{DunklXu2014}
C.F. Dunkl, Y. Xu: \emph{Orthogonal polynomials of several variables}. Encyclopedia of Mathematics and its Applications 155, Cambridge University Press, Cambridge, Second edition, 2014.

\bibitem{DH2020}
J. Dziuba\'{n}ski, A. Hejna: Upper and lower bounds for Littlewood-Paley square functions in the Dunkl setting. Preprint (2020), arXiv:2005.00793v2.

\bibitem{DziPre2018}
J. Dziuba\'{n}ski, M. Preisner: Hardy spaces for semigroups with Gaussian bounds. Annali di Matematica 197(3) (2018), 965--987.

\bibitem{JLZ2016}
R. Jiang, H. Li, H. Zhang: Heat kernel bounds on metric measure spaces and some applications. Potential Anal. 44 (2016), 601--627.

\bibitem{LZ2020}
H. Li, M. Zhao: Dimension-free square function estimates for Dunkl operators. Preprint (2020), arXiv:2003.11843. To appear in Math. Nachr., doi: 10.1002/mana.202000210.

\bibitem{LZL2017}
Jianquan Liao, Xiaoliang Zhang, Zhongkai Li: On Littlewood--Paley functions associated with the Dunkl operator. Bull. Aust. Math. Soc. 96 (2017), 126--138.

\bibitem{ML2019}
M. Maslouhi, E. Lamine: On the Generalized Ornstein-Uhlenbeck Semigroup. Preprint (2019), arXiv:1907.13474v3.

\bibitem{Rosler1998}
M. R\"{o}sler: Generalized Hermite polynomials and the heat equation for Dunkl operators. Comm. Math. Phys. 192 no. 3 (1998), 519--542.

\bibitem{Rosler2003a}
M. R\"{o}sler: A positive radial product formula for the Dunkl kernel. Trans. Amer. Math. Soc. 355 (2003), 2413--2438.

\bibitem{Rosler2003}
M. R\"{o}sler: Dunkl operators: Theory and Applications. In \emph{Orthogonal Polynomials and Special Functions}, Leuven 2002, edited by E. Koelink, W. Van Assche. Lecture Notes in Math. vol. 1817, Springer, Berlin, 2003, pp. 93--135.

\bibitem{RosVoi1998}
M. R\"{o}sler, M. Voit: Markov processes related with Dunkl operators. Adv. App. Math. 21 (1998), 575--643.

\bibitem{Soltani2005}
F. Soltani: Littlewood--Paley operators associated with the Dunkl operator on $\R$. J. Funct. Anal. 221 (2005), 205--225.

\bibitem{Soltani05}
F. Soltani: Littlewood-Paley g-function in the Dunkl analysis on $\R^d$. J. Ineq. Pure and Appl. Math. 6 (2005), no. 3.

\bibitem{St1970}
E.M. Stein: \emph{Singular Integrals and Differentiability Properties of Functions}. Princeton Mathematical Series, No. 30, Princeton
Univ. Press, Princeton, 1970.

\bibitem{Stein70}
E.M. Stein: \emph{Topics in Harmonic Analysis Related to the Littlewood--Paley Theory}. Ann. of Math. Stud. 63, Princeton
    Univ. Press, Princeton, 1970.

\bibitem{Stein1982}
E.M. Stein: The development of square functions in the work of A. Zygmund. Bull. Amer. Math. Soc. 7 (1982), 359--376.
\end{thebibliography}
\end{document}